\newtheorem{thm}{Theorem}[section]
\newtheorem{cor}[thm]{Corollary}
\newtheorem{pro}[thm]{Proposition}
\newtheorem{ex}[thm]{Example}
\newtheorem{rmk}[thm]{Remark}
\newtheorem{defi}[thm]{Definition}
\newcommand {\emptycomment}[1]{}
\newcommand{\lon }{\,\rightarrow\,}
\newcommand{\be }{\begin{equation}}
\newcommand{\ee }{\end{equation}}
\newcommand{\g}{\mathfrak g}
\newcommand{\h}{\mathfrak h}
\newcommand{\huaB}{\mathcal{B}}
\newcommand{\aaar}{\substack{\longrightarrow\\[-0.85em] \longrightarrow \\[-0.85em] \longrightarrow}}
\newcommand{\huaI}{\mathcal{I}}
\newcommand{\huaO}{{\mathcal{O}}}
\newcommand{\dM}{\mathrm{d}}
\newcommand{\Courant}[1]{\left\llbracket  #1\right\rrbracket }
\newcommand{\Id}{{\rm{Id}}}
\newcommand{\br}[1]{   [ \cdot,    \cdot  ]   }
\newcommand{\Der}{\mathrm{Der}}
\newcommand{\Ad}{\mathrm{Ad}}
\newcommand{\Aut}{\mathrm{Aut}}
\newcommand{\gl}{\mathfrak {gl}}
\newcommand{\so}{\mathfrak {so}}
\newcommand{\ad}{\mathrm{ad}}
\begin{document}

\title{On the integration of relative Rota-Baxter Lie algebras}

\author{Jun Jiang}
\address{Department of Mathematics, Jilin University, Changchun 130012, Jilin, China}
\email{junjiang@jlu.edu.cn}

\author{Yunhe Sheng}
\address{Department of Mathematics, Jilin University, Changchun 130012, Jilin, China}
\email{shengyh@jlu.edu.cn}

\author{Chenchang Zhu}
\address{Mathematics Institute, Georg-August-University Gottingen, Bunsenstrasse 3-5 37073, Gottingen, Germany}
\email{czhu@gwdg.de}


\begin{abstract}
In this paper, we give the necessary and sufficient conditions of the integrability of relative Rota-Baxter Lie algebras via double Lie groups, matched pairs of Lie groups and factorization of diffeomorphisms respectively. We use the integrability of Rota-Baxter operators to characterize whether the Poisson-Lie group integrating a factorizable Lie bialgebra is again factorizable. We thoroughly study the integrability of Rota-Baxter operators on the unique nontrivial 2-dimensional Lie algebra.  As a byproduct,  we construct a matched pair of Lie algebras that can not be integrated to a matched pair of Lie groups.
\end{abstract}


\keywords{Rota-Baxter Lie group, Rota-Baxter Lie algebra, integration, matched pairs}
\renewcommand{\thefootnote}{}
\footnotetext{Corresponding Author: Yunhe Sheng}
\footnotetext{2020 Mathematics Subject Classification. 22E60, 
17B38, 
}

\maketitle

\tableofcontents

\allowdisplaybreaks


\section{Introduction}
The notion of Rota-Baxter operators on associative algebras was introduced by G. Baxter in his study of fluctuation theory in probability \cite{Bax}. They are applied in the Connes-Kreimer's algebraic approach to renormalization of quantum field theory~\cite{CK,Gub}. In the corresponding semi-classical world, the notion of relative Rota-Baxter operators (also called $\huaO$-operators) on Lie algebras was introduced in \cite{Ku}, and they are closely related to various classical Yang-Baxter equations. A Rota-Baxter operator of weight $0$ on a Lie algebra is naturally the operator form of a classical skew-symmetric $r$-matrix \cite{STS}. Relative Rota-Baxter operators of weight $0$ on Lie algebras give rise to solutions of the classical Yang-Baxter equation in the semidirect product Lie algebras \cite{Bai}. Rota-Baxter operators of weight $1$ are in one-to-one
correspondence with solutions of the modified Yang-Baxter equation, and have close relation with factorizable Lie bialgebras \cite{LS}. In addition, a Lie group with zero Schouten curvature gives rise a Rota-Baxter operator of weight $1$ on a Lie algebra \cite{Kos}. In the sequel, for simplicity, we will call relative Rota-Baxter operators of weight $1$ on Lie algebras relative Rota-Baxter operators on Lie algebras when there is no ambiguity.

The notion of Rota-Baxter operators on  Lie groups  was introduced in \cite{GLS}, then this concept was generalized to relative Rota-Baxter operators on Lie groups in \cite{JSZ}. Rota-Baxter operators on Lie groups give rise to factorizations of the Lie groups, and can be applied to study integrable systems \cite{GLS, STS}. Rota-Baxter operators on Lie groups can also be used to construct set-theoretical solutions of the Yang-Baxter equation \cite{BaG, BaG2}. It was shown in \cite{GLS,JSZ} that by differentiating   (relative) Rota-Baxter operators on Lie groups, one can obtain   (relative) Rota-Baxter operators on the corresponding Lie algebras.

The converse direction, namely whether relative Rota-Baxter operators on Lie algebras can be integrated to relative Rota-Baxter operators on Lie groups is still not clear. The process from (relative) Rota-Baxter Lie algebras to (relative) Rota-Baxter Lie groups is called the integration. The first attempt in this direction is give in \cite{JSZ}, where it was shown that (relative) Rota-Baxter operators on Lie algebras can be integrated to {\em local} (relative) Rota-Baxter operators on Lie groups.  In general, the global integrability of an algebraic structure  has some topological obstruction. For example, the obstruction of integrability for  infinite-dimensional Lie algebras and that for Lie algebroids are studied in \cite{ neeb:cent-ext-gp, wz:int, cf} respectively. Very recently, some new results have appeared on the integrability of Rota-Baxter operators: \cite{Rat} shows that every Rota-Baxter operator on the Heisenberg Lie algebra can be integrated into a Rota-Baxter operator on the Heisenberg Lie group; while \cite{GKST} studies a version of formal integration of complete Rota-Baxter Lie algebras.

In this paper, we study the global integrability of relative Rota-Baxter operators on Lie algebras using different approaches. We give the necessary and sufficient integrability conditions of  relative Rota-Baxter operators via double Lie groups, matched pairs of Lie groups and factorization of diffeomorphisms respectively. As applications, we use the integrability of Rota-Baxter operators to characterize when exactly a Poisson-Lie group of a factorizable Lie bialgebra is factorizable again.  We also construct some examples that Rota-Baxter Lie algebras can not be integrated to Rota-Baxter Lie groups. As a byproduct,  we construct a matched pair of Lie algebras that can not be integrated to a matched pair of Lie groups.

\emptycomment{
{\bf Main Theorem $1$.} Let $((\g, \h), \phi, B)$ be a relative Rota-Baxter Lie algebra. The map $B:\h\lon\g$ can be integrated to a relative Rota-Baxter operator of Lie groups if and only if
\begin{itemize}
\item[\rm(i)] $D$ is a closed subgroup;
\item[\rm(ii)] $D\cap G=(e_G, e_H)$ and $G\ltimes_\Phi H=D\cdot_\Phi G$.
\end{itemize}

{\bf Main Theorem $2$.} Let $((\g, \h), \phi, B)$ be a relative Rota-Baxter Lie algebra. The map $B:\h\lon\g$ can be integrated into a relative Rota-Baxter operator of Lie groups if and only if the matched pair $(\g, Gr(B); \bar{\phi}, \bar{\theta})$ can be integrated into a matched pair of Lie groups $(G, E; \bar{\Phi}, \bar{\Theta})$.

{\bf Main Theorem $3$.} Let $((\g, \g), \ad, B)$ be a Rota-Baxter Lie algebra. The map $B:\g\lon\g$ is integrable if and only if the map $J: D\lon G$ is invertible.
}
The paper is organized as follows. In Section $2$, we use three approaches to give the necessary and sufficient conditions of   relative Rota-Baxter Lie algebras being integrable. In Section $3$, we use the integrability of Rota-Baxter operators on Lie algebras to characterize when Poisson-Lie groups integrating factorizable Lie bialgebras are factorizable Poisson-Lie groups. In Section $4$, we study the integrability of Rota-Baxter operators on the nontrivial $2$-dimensional Lie algebra. In particular, we give a concrete example of a matched pair of Lie algebras that can not be integrated to a matched pair  of Lie groups.



\vspace{2mm}
\noindent
{\bf Acknowledgements.} This research is supported by NSFC(12471060,12401076), China Postdoctoral Science Foundation (2023M741349) and DFG ZH 274/5-1.

\emptycomment{
\begin{ex}
Consider a $2$-dimensional real Lie algebra
$$\Big(\g=\text{span}\{e_1=\left(
\begin{array}{cc}
1 & 0 \\
0 & -1 \\
\end{array}
\right), e_2=\left(
\begin{array}{cc}
0 & 1 \\
0 & 0 \\
\end{array}
\right)
\}, ~~~~[\cdot, \cdot]\Big),$$ where $[e_i, e_j]=e_ie_j-e_je_i, i, j\in\{1, 2\}$.
Define a linear map $B:\g\lon\g$ by
$$
B(e_1, e_2)=(e_1, e_2)
\left(
\begin{array}{cc}
b_{11} & b_{12} \\
b_{21} & b_{22} \\
\end{array}
\right).
$$
Then $B$ is a Rota-Baxter operator if and only if
$$
[B(e_1), B(e_2)]=B([B(e_1), e_2]+[e_1, B(e_2)]+[e_1, e_2]),
$$
which implies that
$$
B=
\left(
\begin{array}{cc}
\lambda & 0 \\
\mu & 0 \\
\end{array}
\right), \quad \text{or} \quad
B=
\left(
\begin{array}{cc}
\lambda & 0 \\
\mu & -1 \\
\end{array}
\right)
\quad \text{or} \quad B=
\left(
\begin{array}{cc}
-m-1 & k \\
\frac{-m^2-m}{k} & m \\
\end{array}
\right), \quad k\neq0.
$$
\end{ex}
}

\section{Integration of relative Rota-Baxter operators}
In \cite{JSZ}, it was shown that any relative Rota-Baxter Lie algebra can be integrated into a relative Rota-Baxter Lie group locally. In this section we give the necessary and sufficient conditions of relative Rota-Baxter Lie algebras being global integrable.
We first recall some  basic results of relative Rota-Baxter Lie algebras and relative Rota-Baxter Lie groups.

\begin{defi}\rm(\cite{BGN, BD})
Let $(\g, [\cdot, \cdot]_\g)$ and $(\h, [\cdot, \cdot]_\h)$ be Lie algebras, $\phi:\g\lon\Der(\h)$ be a Lie algebra homomorphism. A linear map $B:\h\lon\g$ is called a relative Rota-Baxter operator of weight $1$ if
\begin{equation*}
[B(u), B(v)]_\g=B(\phi(B(u))v-\phi(B(v))u+[u, v]_\h), \quad \forall u, v\in\h.
\end{equation*}
The quadruple $(\g, \h, \phi, B)$ is called a relative Rota-Baxter Lie algebra of weight $1$. In particular, if $\h=\g$ and the action is the adjoint representation $\ad$ of $\g$ on itself, then $B$ is called a Rota-Baxter operator of weight $1$, and $(\g, B)$ is called a Rota-Baxter Lie algebra of weight $1$.
\end{defi}

\begin{rmk}
In the sequel, we will call relative Rota-Baxter Lie algebras of weight $1$ simply relative Rota-Baxter Lie algebras when there is no ambiguity.
\end{rmk}

\begin{defi}\rm(\cite{GLS, JSZ})
Let $(G, \cdot_G)$ and $(H, \cdot_H)$ be Lie groups, $\Phi:G\lon\Aut(H)$ be an action of $G$ on $H$ . A smooth map $\huaB: H\lon G$ is called a relative Rota-Baxter operator if
\begin{equation*}
\huaB(a)\cdot_G\huaB(b)=\huaB(a\cdot_H\Phi(\huaB(a))b), \quad\forall a, b\in H.
\end{equation*}
The quadruple $(G, H, \Phi, \huaB)$ is called a relative Rota-Baxter Lie group. In particular, if $H=G$ and the action is the adjoint representation $\Ad$ of $G$ on itself, then $\huaB$ is called a Rota-Baxter operator, and $(G, \huaB)$ is called a Rota-Baxter Lie group.
\end{defi}

There is a differentiation relation between relative Rota-Baxter Lie groups and relative Rota-Baxter Lie algebras as following.

\begin{thm}\cite{GLS, JSZ}
Let $(G, H, \Phi, \huaB)$ be a relative Rota-Baxter Lie group. Denote by $\g$ and $\h$ Lie algebras of $G$ and $H$ respectively. Denote by $\phi:\g\lon\Der(\h)$ the Lie algebra action comes from the Lie group action $\Phi:G\lon\Aut(H)$. Then $(\g, \h, \phi, \huaB_{*_{e_H}})$ is a relative Rota-Baxter Lie algebra.
\end{thm}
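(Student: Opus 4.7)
The statement is a routine differentiation of the group-level Rota-Baxter identity
$$\huaB(a)\cdot_G \huaB(b)=\huaB\bigl(a\cdot_H \Phi(\huaB(a))b\bigr),$$
and the only subtlety is that the Lie bracket is a \emph{second}-order object, so a single derivative will not suffice. The plan is to substitute $a=\exp_H(tu)$, $b=\exp_H(tv)$ into the group identity, expand both sides to order $t^{2}$ using the Baker--Campbell--Hausdorff formula in $G$ and $H$, and antisymmetrize in $(u,v)$ to cancel the unknown second-order Taylor coefficient of $\huaB$.

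First, setting $a=b=e_H$ in the group identity gives $\huaB(e_H)^{2}=\huaB(e_H)$, hence $\huaB(e_H)=e_G$. In particular $\huaB_{*_{e_H}}=B$ is a well-defined linear map $\h\to\g$. Writing $\huaB(\exp_H X)=\exp_G\bigl(B(X)+F_2(X)+O(|X|^{3})\bigr)$ with $F_2$ a homogeneous quadratic $\h\to\g$, and using the fact that $\Phi(g)\in\Aut(H)$ satisfies $\Phi(g)\exp_H Y=\exp_H(\Phi_{*}(g)Y)$ with $\Phi_{*}(\exp_G X)=\exp(\phi(X))\in\Aut(\h)$, I can expand each ingredient to the relevant order.

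Concretely, the left-hand side becomes
$$\exp_G\Bigl(t(B(u)+B(v))+t^{2}\bigl(F_2(u)+F_2(v)+\tfrac{1}{2}[B(u),B(v)]_\g\bigr)+O(t^{3})\Bigr),$$
while on the right-hand side one first computes $\Phi(\huaB(\exp_H(tu)))\exp_H(tv)=\exp_H(tv+t^{2}\phi(B(u))v+O(t^{3}))$, then multiplies by $\exp_H(tu)$ using BCH in $H$ to get an argument of $\huaB$ equal to $\exp_H\bigl(t(u+v)+t^{2}(\phi(B(u))v+\tfrac{1}{2}[u,v]_\h)+O(t^{3})\bigr)$, and finally applies $\huaB$ to obtain
$$\exp_G\Bigl(t(B(u)+B(v))+t^{2}\bigl(B(\phi(B(u))v)+\tfrac{1}{2}B([u,v]_\h)+F_2(u+v)\bigr)+O(t^{3})\Bigr).$$
The $O(t)$ terms match by linearity of $B$. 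Equating the $O(t^{2})$ terms yields
$$\tfrac{1}{2}[B(u),B(v)]_\g-\bigl(F_2(u+v)-F_2(u)-F_2(v)\bigr)=B(\phi(B(u))v)+\tfrac{1}{2}B([u,v]_\h).$$

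Here the combination $F_2(u+v)-F_2(u)-F_2(v)$ is symmetric in $(u,v)$ since $F_2$ is quadratic, so it drops out after antisymmetrizing. Subtracting the equation with $u$ and $v$ interchanged gives exactly
$$[B(u),B(v)]_\g=B\bigl(\phi(B(u))v-\phi(B(v))u+[u,v]_\h\bigr),$$
which is the defining identity of a relative Rota-Baxter Lie algebra. The only mildly technical step is the bookkeeping of second-order BCH terms on both sides; there is no genuine obstacle, since the quadratic correction $F_2$ is forced to cancel by antisymmetry. Thus $(\g,\h,\phi,B)$ is a relative Rota-Baxter Lie algebra, as claimed.
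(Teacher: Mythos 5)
Your proof is correct. The normalization $\huaB(e_H)=e_G$, the second-order BCH expansion of both sides of $\huaB(a)\cdot_G\huaB(b)=\huaB(a\cdot_H\Phi(\huaB(a))b)$ along $a=\exp_H(tu)$, $b=\exp_H(tv)$, and the observation that the polarization $F_2(u+v)-F_2(u)-F_2(v)$ of the unknown quadratic Taylor coefficient is symmetric and therefore cancels under antisymmetrization are all sound; the resulting identity is exactly the weight-one relative Rota--Baxter condition. Note, however, that the paper states this theorem with a citation and gives no proof, and the route most consistent with its own machinery is different and shorter: by Proposition \ref{graphgroup}, $Gr(\huaB)$ is a Lie subgroup of $G\ltimes_\Phi H$ whose Lie algebra is $Gr(\huaB_{*_{e_H}})$, so $Gr(\huaB_{*_{e_H}})$ is a Lie subalgebra of $\g\ltimes_\phi\h$, and Proposition \ref{graphro} then says precisely that $\huaB_{*_{e_H}}$ is a relative Rota--Baxter operator. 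That graph argument hides the second-order bookkeeping inside the standard fact that the Lie algebra of a subgroup is a subalgebra, whereas your direct computation is self-contained and makes visible why the unknown second-order jet of $\huaB$ cannot obstruct the conclusion; both are legitimate, and yours is the more elementary of the two.
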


Let $(\g, \h, \phi, B)$ be a relative Rota-Baxter Lie algebra. Let $G$ and $H$ be the connected and simply connected Lie
groups integrating $\g$ and $\h$ respectively. Since $\phi:\g\lon\Der(\h)$ is a Lie algebra homomorphism, then $\phi$ can be integrated into a unique Lie group homomorphism $\tilde{\phi}:G\lon \Aut(\h)$. By the Lie II theorem, it shows that $\Aut(\h)\cong\Aut(H)$, thus there is a unique homomorphism $\Phi: G\lon\Aut(H)$ integrating $\tilde{\phi}$. This procedure can be well explained by the following diagram:
\begin{equation*}\label{eq:relation}
\small{ \xymatrix{G\ar@{=}[d]  \ar[rr]^{\Phi} & & \Aut(H) \\
 G \ar[rr]^{\tilde{\phi}} &  &  \Aut(\h)\ar[u]_{\text{integration}}  \\
 \g\ar[u]^{\text{integration}} \ar[rr]^{  \phi=\tilde{\phi}_{*e_G} } & &\Der(\h)\ar[u]_{\text{integration}}.}
}
\end{equation*}


\subsection{Integrability via double Lie groups}
Let $(\g, [\cdot,\cdot]_\g), (\h, [\cdot,\cdot]_\h)$ be Lie algebras and $\phi:\g\lon\Der(\h)$ be a Lie algebra homomorphism. Consider the semidirect product Lie algebra $\g\ltimes_{\phi} \h$, with the Lie bracket $[\cdot, \cdot]_{\phi}$ given by
$$
[(x, u), (y, v)]_{\phi}=([x, y]_\g, \phi(x)v-\phi(y)u+[u, v]_\h), \quad \forall x, y\in\g, u, v\in\h.
$$
There is the following proposition.
\begin{pro}\rm(\cite{JSZ})\label{graphro}
Let $\phi: \g\rightarrow\Der(\h)$ be an action of a Lie algebra $(\g, [\cdot,\cdot]_{\g})$ on a Lie algebra $ (\h, [\cdot,\cdot]_{\h})$. Then a linear map $B: \h\rightarrow\g$ is a relative Rota-Baxter operator if and only if the graph   $Gr(B)=\{(B(u), u)|u\in \h\}$ is a subalgebra of the Lie algebra $(\g\ltimes_\phi\h, [\cdot, \cdot]_{\phi})$.
\end{pro}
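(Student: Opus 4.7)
The plan is to unwind both sides of the proposed equivalence via a direct bracket computation in the semidirect product. First I would pick two arbitrary elements $(B(u),u), (B(v),v) \in Gr(B)$ and evaluate the semidirect product bracket using the definition of $[\cdot,\cdot]_\phi$, obtaining
\begin{equation*}
[(B(u),u),(B(v),v)]_\phi \;=\; \bigl([B(u),B(v)]_\g,\; \phi(B(u))v - \phi(B(v))u + [u,v]_\h\bigr).
\end{equation*}

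The key observation is then that this element lies in $Gr(B)$ if and only if its first component equals $B$ applied to its second component, i.e.\ if and only if
\begin{equation*}
[B(u),B(v)]_\g \;=\; B\bigl(\phi(B(u))v - \phi(B(v))u + [u,v]_\h\bigr),
\end{equation*}
which is precisely the defining equation of a relative Rota-Baxter operator. From here both implications are immediate: for the forward direction, if $B$ is a relative Rota-Baxter operator, then the displayed identity holds for all $u,v\in\h$, hence the bracket of two elements of $Gr(B)$ again lies in $Gr(B)$, making it a Lie subalgebra. For the converse, if $Gr(B)$ is a subalgebra then the bracket above must lie in $Gr(B)$ for every $u,v\in\h$, which forces the displayed operator identity and hence shows that $B$ is a relative Rota-Baxter operator.

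There is no real obstacle here; the statement is essentially a tautological reformulation, and the only thing to be careful about is to verify that picking arbitrary $u,v\in\h$ (rather than arbitrary pairs in $\g\oplus\h$) is sufficient, which follows because every element of $Gr(B)$ is of the form $(B(u),u)$ and $u$ ranges freely over $\h$. Consequently the proof will consist of one display of the bracket and two short sentences extracting each implication.
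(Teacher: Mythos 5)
Your computation is correct and is exactly the standard argument: the paper cites this result from \cite{JSZ} without reproducing a proof, and the intended proof there is precisely your observation that the $\phi$-bracket of two graph elements lies in $Gr(B)$ if and only if the relative Rota-Baxter identity holds, with $Gr(B)$ automatically being a subspace since $B$ is linear. Nothing is missing.
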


Let $(G, \cdot_G), (H, \cdot_H)$ be Lie groups and $\Phi: G\lon\Aut(H)$ be an action of $G$ on $H$. There is the semidirect product Lie group $(G\ltimes_\Phi H, \cdot_\Phi)$, where $\cdot_\Phi$ is given by
$$
(g_1, h_1)\cdot_\Phi(g_2, h_2)=(g_1\cdot_G g_2, h_1\cdot_H\Phi(g_1)h_2), \quad\forall g_1, g_2\in G, h_1, h_2\in H.
$$
\begin{pro}\rm(\cite{JSZ})\label{graphgroup}
Let  $\Phi: G\to\Aut(H)$ be an action of a Lie group $G$ on a Lie group $ H$. Then a smooth map $\huaB: H\rightarrow G$ is a relative Rota-Baxter operator if and only if the graph   $Gr(\huaB)=\{(\huaB(h),h)|h\in H\}$ is a Lie subgroup of the  Lie group $(G\ltimes_\Phi H, \cdot_\Phi)$.

Moreover, if $\huaB:H\lon G$ is a relative Rota-Baxter operator on Lie groups, then the Lie algebra of $Gr(\huaB)$ is $Gr(\huaB_{*_{e_H}})$.
\end{pro}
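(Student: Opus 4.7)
The plan is to verify the equivalence by translating the Lie subgroup condition into the relative Rota-Baxter identity via a direct multiplication computation, and then read off the Lie algebra of $Gr(\huaB)$ using its description as the image of a smooth embedding. First I would observe that the map $\iota: H \lon G \ltimes_\Phi H$, $\iota(h) = (\huaB(h), h)$, is a smooth injective immersion with image $Gr(\huaB)$, and that the second projection $\pi_H: G \ltimes_\Phi H \lon H$ restricted to $Gr(\huaB)$ is a continuous two-sided inverse of $\iota$. Hence $\iota$ is a smooth embedding and $Gr(\huaB)$ is automatically an embedded submanifold of $G \ltimes_\Phi H$ diffeomorphic to $H$; only the group-theoretic content remains.

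For the equivalence, the key computation is
\begin{equation*}
(\huaB(a), a) \cdot_\Phi (\huaB(b), b) = \bigl(\huaB(a) \cdot_G \huaB(b),\, a \cdot_H \Phi(\huaB(a)) b\bigr),
\end{equation*}
which lies in $Gr(\huaB)$ if and only if $\huaB(a) \cdot_G \huaB(b) = \huaB\bigl(a \cdot_H \Phi(\huaB(a)) b\bigr)$ for all $a, b \in H$; this is precisely the relative Rota-Baxter identity. Once multiplicative closure is established, putting $a = b = e_H$ forces $\huaB(e_H) = e_G$, so the identity $(e_G, e_H)$ lies in $Gr(\huaB)$; and for inverses, setting $b := \Phi(\huaB(a)^{-1})(a^{-1})$ and applying the Rota-Baxter identity gives $\huaB(a) \cdot_G \huaB(b) = \huaB(e_H) = e_G$, hence $\huaB(b) = \huaB(a)^{-1}$ and $(\huaB(b), b) = (\huaB(a), a)^{-1}$. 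Combined with the embedding observation, this yields both directions, the smoothness of the resulting group operations on $Gr(\huaB)$ being automatic from the embedded submanifold structure.

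For the moreover statement, the Lie algebra of $Gr(\huaB)$ is the tangent space at $(e_G, e_H)$. Since $Gr(\huaB)$ is the image of the smooth embedding $\iota$, this tangent space equals $\iota_{*_{e_H}}(\h) = \{(\huaB_{*_{e_H}}(u), u) \mid u \in \h\} = Gr(\huaB_{*_{e_H}})$. The only conceptual subtlety throughout is verifying that the set-theoretic subgroup condition lifts to a genuine Lie subgroup, but this is handled uniformly by the embedding property of $\iota$, which ensures the restricted multiplication and inversion on $Gr(\huaB)$ are smooth as maps of embedded submanifolds.
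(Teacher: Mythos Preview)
Your argument is correct. Note, however, that the paper does not give its own proof of this proposition: it is quoted from \cite{JSZ} and stated without argument, so there is nothing in the present paper to compare against. Your direct computation---closure of $Gr(\huaB)$ under $\cdot_\Phi$ is literally the relative Rota-Baxter identity, the graph of a smooth map is automatically an embedded submanifold so smoothness of the restricted operations is free, and the tangent space of the image of the embedding $\iota$ at the identity is $\iota_{*_{e_H}}(\h)=Gr(\huaB_{*_{e_H}})$---is the natural proof and is exactly what one would expect the cited argument to be. One small remark: your deduction that $\huaB(e_H)=e_G$ from $a=b=e_H$ tacitly uses $\Phi(g)e_H=e_H$, which holds since $\Phi(g)\in\Aut(H)$; and your inverse check correctly reproduces $(\huaB(a),a)^{-1}=(\huaB(a)^{-1},\Phi(\huaB(a)^{-1})(a^{-1}))$ in the semidirect product. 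No gaps.
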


\begin{defi}\cite{LuW}
Let $G$ be a Lie group and $A, B$ be Lie subgroups of $G$. The triple $(G, A, B)$ is called a double Lie group if $A$ and $B$ are both closed Lie subgroups of $G$ such that $$A\cap B=\{e_G\},\quad AB=G.$$
\end{defi}
In the following, we will use double Lie groups to explore the integration of relative Rota-Baxter Lie algebras.

 Let $(\g, \h, \phi, B)$ be a relative Rota-Baxter Lie algebra. Denote by $G$ and $H$ connected and simply connected Lie groups integrating $\g$ and $\h$ respectively. Denote by $\Phi: G\lon\Aut(H)$ the integration of $\phi:\g\lon\Der(\h)$.
We know that the semidirect product Lie group $(G\ltimes_\Phi H, \cdot_\Phi)$ is the integration of $(\g\ltimes_\phi\h, [\cdot,\cdot]_\phi)$. By Proposition \ref{graphro},  $(Gr(B), [\cdot, \cdot]_\phi)$ is a subalgebra of the Lie algebra $(\g\ltimes_\phi\h, [\cdot,\cdot]_\phi)$. Denote by $D\subseteq G\ltimes_\Phi H$ the connected Lie subgroup integrating $(Gr(B), [\cdot, \cdot]_\phi)$.

\begin{thm}\label{proint2}
With the above notations, a relative Rota-Baxter Lie algebra $(\g, \h, \phi, B)$ can be integrated into a relative Rota-Baxter Lie group $(G, H, \Phi, \huaB)$ if and only if $(G\ltimes_\Phi H, D, G\times e_H)$ is a double Lie group.
\emptycomment{
\begin{itemize}
\item[\rm(i)] $D$ is a closed Lie subgroup of $G\ltimes_\Phi H$;
\item[\rm(ii)] $D\cap (G\times e_H)=(e_G, e_H)$ and $G\ltimes_\Phi H=D\cdot_\Phi (G\times e_H)$.
\end{itemize}}
\end{thm}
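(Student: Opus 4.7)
The plan is to prove both directions by identifying the connected Lie subgroup $D$ with the graph of the sought-after integrating map $\huaB$.

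For the forward direction, suppose $(G, H, \Phi, \huaB)$ integrates $(\g, \h, \phi, B)$, i.e., $\huaB\colon H\lon G$ is smooth with $\huaB_{*_{e_H}}=B$. By Proposition \ref{graphgroup}, $Gr(\huaB)$ is a Lie subgroup of $G\ltimes_\Phi H$ whose Lie algebra is $Gr(B)$. Since $H$ is connected, so is $Gr(\huaB)$, hence $Gr(\huaB)=D$ by uniqueness of the connected Lie subgroup integrating $Gr(B)$. Then $D$ is closed because the graph of a continuous map to a Hausdorff space is closed; from the group Rota-Baxter identity with $a=b=e_H$ one deduces $\huaB(e_H)^2=\huaB(e_H)$, i.e.\ $\huaB(e_H)=e_G$, forcing $D\cap(G\times e_H)=\{(e_G,e_H)\}$; and the explicit identity
\begin{equation*}
(g,h)=(\huaB(h),h)\cdot_\Phi(\huaB(h)^{-1}g,e_H)
\end{equation*}
yields $G\ltimes_\Phi H=D\cdot_\Phi(G\times e_H)$.

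For the converse, suppose $(G\ltimes_\Phi H, D, G\times e_H)$ is a double Lie group. The central step is to show that the restriction $\pi_H|_D\colon D\to H$ of the projection $(g,h)\mapsto h$ is a diffeomorphism. At the Lie algebra level it is the isomorphism $Gr(B)\to\h$, $(B(u),u)\mapsto u$, so $\pi_H|_D$ is a local diffeomorphism. For surjectivity, given $h\in H$, decompose $(e_G,h)=d_0\cdot_\Phi(g_0,e_H)$ with $d_0=(a_0,b_0)\in D$; expanding the product gives $(a_0g_0,b_0)=(e_G,h)$, so $b_0=h$ and $\pi_H(d_0)=h$. For injectivity, if $d_i=(a_i,h)\in D$ for $i=1,2$, then both equalities $d_i\cdot_\Phi(a_i^{-1},e_H)=(e_G,h)$ are valid double Lie group decompositions of $(e_G,h)$, so uniqueness forces $d_1=d_2$.

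Once $\pi_H|_D$ is shown to be a diffeomorphism, I define $\huaB:=\pi_G\circ(\pi_H|_D)^{-1}\colon H\to G$, which is smooth with $Gr(\huaB)=D$. Proposition \ref{graphgroup} then yields that $\huaB$ is a relative Rota-Baxter operator on Lie groups, and comparing Lie algebras gives $Gr(\huaB_{*_{e_H}})=\Lie(Gr(\huaB))=\Lie(D)=Gr(B)$, so $\huaB_{*_{e_H}}=B$ and $\huaB$ integrates $B$. The main obstacle is the backward direction, specifically upgrading $\pi_H|_D$ from a local to a global diffeomorphism: while the local statement is immediate from Lie algebra data, the global one genuinely requires the full double Lie group hypothesis, with closedness of $D$ ensuring smoothness of the inverse once bijectivity is known, and with the factorization $G\ltimes_\Phi H=D\cdot_\Phi(G\times e_H)$ together with the trivial intersection providing surjectivity and injectivity through the specific form of the decomposition of the elements $(e_G,h)$.
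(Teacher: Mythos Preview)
Your proposal is correct and follows essentially the same route as the paper's proof: both directions hinge on identifying $D$ with $Gr(\huaB)$, and for the converse both you and the paper show that the second projection restricted to $D$ is bijective (using the factorization and trivial-intersection conditions exactly as you do) and a local diffeomorphism, hence a global one. The only difference is cosmetic: where you assert in one line that $\pi_H|_D$ is a local diffeomorphism from the tangent isomorphism $Gr(B)\to\h$ at the identity, the paper spells out the translation identity $P_H\big((g,a)\cdot_\Phi V\big)=a\cdot_H\Phi(g)P_H(V)$ to transport the local inverse from a neighborhood of the identity to a neighborhood of an arbitrary point of $D$.
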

\begin{proof}
If a relative Rota-Baxter Lie algebra $(\g, \h, \phi, B)$ can be integrated into a relative Rota-Baxter Lie group $(G, H, \Phi, \huaB)$, by Proposition \ref{graphgroup}, we have that $$D=Gr(\huaB)=\{(\huaB(h), h)|h\in H\}$$ is a connected Lie subgroup integrating $Gr(B)$. Since $Gr(\huaB)$ is the graph of smooth map $\huaB$, by \cite[Proposition 5.4]{Lee}, it follows that $Gr(\huaB)$ is an embedded submanifold. By \cite[Proposition 9.3.9]{Neeb}, we have that $Gr(\huaB)$ is a connected closed Lie subgroup. Moreover, by direct calculation, it follows that
$$
Gr(\huaB)\cap (G\times e_H)=(e_G, e_H),
$$
and
$$
Gr(\huaB)\cdot_\Phi (G\times e_H)=\{(\huaB(h)\cdot_G g, h)|\forall (\huaB(h), h)\in Gr(\huaB), g\in G\}=G\ltimes_{\Phi} H.
$$
Thus $(G\ltimes_\Phi H, Gr(\huaB), G\times e_H)$ is a double Lie group.

Conversely, assume that $(G\ltimes_\Phi H, D, G\times e_H)$ is a double Lie group, it follows that $D$ is a closed Lie subgroup of $G\ltimes_\Phi H$ and
$$
D\cap (G\times e_H)=(e_G, e_H), \quad G\ltimes_\Phi H=D\cdot_\Phi (G\times e_H),
$$
it means that $D$ is an embedded submanifold of $G\ltimes_\Phi H$. Define smooth maps $P_H: G\ltimes_\Phi H\lon H$ and $P_G: G\ltimes_\Phi H\lon G$ by
$$
P_G(g, h)=g, \quad P_H(g, h)=h, \quad \forall g\in G, h\in H.
$$
Then we have that the map $P_H:D\lon H$ is a smooth map. Since $$D\cap (G\times e_H)=(e_G, e_H)$$ and
$$
G\ltimes_\Phi H=D\cdot_\Phi (G\times e_H)=\{(g'\cdot_G g, h)|\forall (g', h)\in D, g\in G\},
$$
then it follows that $P_H: D\lon H$ is subjective. In addition, if $(g_1, h)\in D$ and $(g_2, h)\in D$, since
$$
(g_2, h)\cdot_{\Phi}(g_1, h)^{-1}=(g_1^{-1}\cdot_G g_2, e_H)\in (G\times e_H)\cap D=(e_G, e_H),
$$
we have that $g_1=g_2$. Thus $P_H: D\lon H$ is injective.

Since the Lie algebra of the Lie subgroup $D$ is $Gr(B)\subseteq\g\ltimes_\phi\h$. Denote by ${P_H}_{*}: Gr(B)\lon\h$ the tangent map of $P_H$ at $(e_G, e_H)$, then we have
\begin{equation*}
{P_H}_{*}(B(u),u)=u,\quad \forall u\in\h,
\end{equation*}
which is an isomorphism from the vector space $Gr(B)$ to the vector
space $\h$. By the inverse function theorem, there exists an open set $V\subseteq D$ contains $(e_G, e_H)$ such that
$P_H: V\lon P_H(V)$ is a diffeomorphism. For any $(g, a)\in D$, we have that $$P_H((g, a)\cdot_\Phi V)=a\cdot_H\Phi(g)P_H(V)$$
is an open set. Denote by $W=a\cdot_H\Phi(g)P_H(V)$, we have
$$
P^{-1}_H|_{W}=L_{(g, a)}\circ P_H^{-1}|_{V}\circ\Phi(g^{-1})\circ l_{a^{-1}},
$$
where $l_{a^{-1}}(b)=a^{-1}\cdot_H b$ and $L_{(g, a)}(h, c)=(g, a)\cdot_{\Phi}(h, c)$.
Thus $P^{-1}_H$ is a smooth map in a neighborhood of $a\in H$. We have that $P^{-1}_H: H\lon D$ is a smooth map, it means that $P_H: D\lon H$ is a diffeomorphism. Thus there exists a smooth map $\huaB: H\lon G$ such that $D=Gr(\huaB)$, where
\begin{equation}\label{eqrbnew}
\huaB(h)=P_G(P_H^{-1}(h)), \quad \forall h\in H.
\end{equation}
For any $h_1, h_2\in H$, we have
$$
\Big(\huaB(h_1), h_1\Big)\cdot_{\Phi}\Big(\huaB(h_2), h_2\Big)=\Big(\huaB(h_1)\cdot_G\huaB(h_2), h_1\cdot_H\Phi(\huaB(h_1))h_2\Big)\in Gr(\huaB),
$$
which implies that
$$\huaB(h_1)\cdot_G\huaB(h_2)=\huaB(h_1\cdot_H\Phi(\huaB(h_1))h_2).$$
Thus $\huaB:H\lon G$ is a relative Rota-Baxter operator.
\end{proof}
\begin{rmk}
See \cite{HS} for more details of when a Lie subgroup is a closed Lie subgroup.
\end{rmk}

Denote by $(G\ltimes_\Phi H)/D=\{(g, h)D|\forall (g, h)\in G\ltimes_\Phi H\}$ the set of left cosets of $D$ in $G\ltimes_\Phi H$. Define a map $q_D: G\ltimes_\Phi H\lon (G\ltimes_\Phi H)/D$ by
$$
q_D(g, h)=(g, h)D, \quad \forall (g, h)\in G\ltimes_\Phi H.
$$
\begin{cor}
With the above notations, a relative Rota-Baxter Lie algebra $(\g, \h, \phi, B)$ can be integrated into a relative Rota-Baxter Lie group $(G, H, \Phi, \huaB)$ if and only if
\begin{itemize}
\item [\rm(i)]$D$ is a closed Lie subgroup;
\item [\rm(ii)]$q_D: G\times e_H\lon (G\ltimes_\Phi H)/D$ is bijective.
\end{itemize}
\end{cor}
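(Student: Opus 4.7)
The plan is to reduce this corollary to Theorem~\ref{proint2} by reformulating the two defining conditions of a double Lie group in terms of the restricted quotient map $q_D|_{G\times e_H}$. The only real content is a set-theoretic translation; condition (i) is already identical in both statements, so the task is to show that condition (ii) captures simultaneously the conditions $D\cap (G\times e_H)=\{(e_G,e_H)\}$ and $G\ltimes_\Phi H=D\cdot_\Phi (G\times e_H)$ from Theorem~\ref{proint2}.

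First I would analyze injectivity. Unpacking the definition, $q_D(g_1,e_H)=q_D(g_2,e_H)$ means $(g_2,e_H)^{-1}\cdot_\Phi(g_1,e_H)\in D$, and since $(g_2,e_H)^{-1}\cdot_\Phi(g_1,e_H)=(g_2^{-1}\cdot_G g_1,e_H)$ lies in $G\times e_H$, this element in fact lies in $D\cap(G\times e_H)$. Hence injectivity of $q_D|_{G\times e_H}$ is equivalent to $D\cap (G\times e_H)=\{(e_G,e_H)\}$.

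Next I would translate surjectivity. Surjectivity of $q_D|_{G\times e_H}$ means that every coset $(g,h)D$ intersects $G\times e_H$, i.e.\ for every $(g,h)\in G\ltimes_\Phi H$ there exist $g'\in G$ and $d\in D$ with $(g,h)=(g',e_H)\cdot_\Phi d$. This is precisely the identity $G\ltimes_\Phi H=(G\times e_H)\cdot_\Phi D$. Taking inverses of both sides, and using that $D$ and $G\times e_H$ are subgroups (hence invariant under inversion), converts this into $G\ltimes_\Phi H=D\cdot_\Phi (G\times e_H)$, which is the remaining factorization condition in Theorem~\ref{proint2}.

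Combining the two equivalences with the unchanged closedness condition (i) shows that the hypotheses of this corollary match exactly those of Theorem~\ref{proint2}, from which the equivalence with integrability of $(\g,\h,\phi,B)$ follows. I do not foresee a genuine obstacle; the only point requiring mild care is the symmetry $AB=\Gamma\Leftrightarrow BA=\Gamma$ for subgroups $A,B$ of a group $\Gamma$, which is used to switch the order of $D$ and $G\times e_H$ in the factorization statement.
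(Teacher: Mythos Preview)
Your proposal is correct and follows exactly the same route as the paper: reduce to Theorem~\ref{proint2} by translating the double Lie group conditions for $(G\ltimes_\Phi H, D, G\times e_H)$ into the bijectivity of $q_D|_{G\times e_H}$. The paper's proof simply asserts this equivalence in one sentence, whereas you spell out the injectivity/surjectivity translation and the subgroup inversion argument $AB=\Gamma\Leftrightarrow BA=\Gamma$; your version is more detailed but not different in substance.
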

\begin{proof}
For Lie groups $(G\ltimes_\Phi H, \cdot_\Phi)$ and $D$, $(G\ltimes_\Phi H, D, G\times e_H)$ is a double Lie group if and only if $D$ is a closed Lie subgroup and $q_D: G\times e_H\lon (G\ltimes_\Phi H)/D$ is bijective. Thus a relative Rota-Baxter Lie algebra $(\g, \h, \phi, B)$ can be integrated into a relative Rota-Baxter Lie group $(G, H, \Phi, \huaB)$ if and only if $D$ is a closed Lie subgroup and $q_D: G\times e_H\lon (G\ltimes_\Phi H)/D$ is bijective.
\end{proof}

\subsection{Integrability via  matched pairs of Lie groups}

In this subsection, we give another necessary and sufficient condition of a relative Rota-Baxter Lie algebra being  global integrable using matched pairs of Lie groups.

\begin{defi}\cite{Maj, Tak}
A matched pair of Lie algebras consists of a pair of Lie algebras $(\g,[\cdot,\cdot]_\g)$ and $(\h,[\cdot,\cdot]_\h)$, a representation $\rho: \g\to\gl(\h)$ of $\g$ on $\h$ and a representation $\mu: \h\to\gl(\g)$ of $\h$ on $\g$ such that
\begin{eqnarray*}
\label{eq:mp1}\rho(x) [u,v]_{\h}&=&[\rho(x)u,v]_{\h}+[u,\rho(x) v]_{\h}+\rho\big((\mu(v)x\big)u-\rho\big(\mu(u)x\big)v,\\
\label{eq:mp2}\mu(u) [x, y]_{\g}&=&[\mu(u)x,y]_{\g}+[x,\mu(u)y]_{\g}+\mu\big(\rho(y)u\big)x-\mu\big(\rho(x)u\big)y,
\end{eqnarray*}
 for all $x,y\in \g$ and $u,v\in \h$. Denote a matched pair of Lie algebras by $(\g,\h;\rho,\mu)$.
\end{defi}
The following result is well known.
\begin{pro}\label{mglie}
Let $(\g,\h;\rho,\mu)$ be a matched pair of Lie algebras. Then $(\g\oplus\h, [\cdot, \cdot]_{\bowtie})$ is a Lie algebra, where
\begin{equation*}
[(x, u), (y, v)]_{\bowtie}=([x, y]_\g+\mu(u)y-\mu(v)x, [u, v]_\h+\rho(x)v-\rho(y)u),
\end{equation*}
for all $x, y\in\g, u, v\in\h$.
\end{pro}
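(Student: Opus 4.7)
The plan is to verify the two defining properties of a Lie algebra for $[\cdot,\cdot]_{\bowtie}$ on $\g\oplus\h$: skew-symmetry and the Jacobi identity.

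Skew-symmetry is immediate: each component of the bracket is a skew-symmetric expression in the two input pairs, since $[\cdot,\cdot]_\g$ and $[\cdot,\cdot]_\h$ are skew and the ``cross'' terms $\mu(u)y-\mu(v)x$ and $\rho(x)v-\rho(y)u$ are manifestly antisymmetric in $(x,u)\leftrightarrow(y,v)$.

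For the Jacobi identity, I would pick $X=(x,u)$, $Y=(y,v)$, $Z=(z,w)$ and expand
\[
J(X,Y,Z)\defbe[[X,Y]_{\bowtie},Z]_{\bowtie}+[[Y,Z]_{\bowtie},X]_{\bowtie}+[[Z,X]_{\bowtie},Y]_{\bowtie},
\]
separating the result into its $\g$-component and its $\h$-component. Within each component I would group the terms by their structural type, which leads to four families that must be shown to vanish: (a) pure $\g$-Jacobi (respectively pure $\h$-Jacobi) expressions, which cancel from the Jacobi identity in $\g$ (resp.\ $\h$); (b) terms of the form $\mu(\,\cdot\,)[\,,\,]_\g$ together with commutators $[\mu(\cdot),\mu(\cdot)]$ arising when the outer bracket acts on the inner $\mu$-term, which cancel because $\mu\colon\h\to\gl(\g)$ is a representation; (c) the analogous family for $\rho$; (d) truly mixed terms in which both $\rho$ and $\mu$ appear, which is exactly where the two compatibility axioms of a matched pair enter.

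The main obstacle, and really the only nontrivial part of the argument, is the bookkeeping in family (d). The plan is to collect, in the $\g$-component, all summands that involve one $\rho$ and one $\mu$; cyclic symmetry will produce precisely three occurrences of $\mu(\rho(\cdot)\cdot)\cdot$ paired against three occurrences of $\mu(\cdot)\mu(\cdot)\cdot$ and a $\mu$ applied to an $\h$-bracket, and the compatibility axiom
\[
\mu(u)[x,y]_{\g}=[\mu(u)x,y]_{\g}+[x,\mu(u)y]_{\g}+\mu(\rho(y)u)x-\mu(\rho(x)u)y
\]
converts them all into a common form in which everything cancels against the $\g$-Jacobi contribution. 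A symmetric argument, using the other compatibility axiom, disposes of the $\h$-component. Since the entire computation is a mechanical but symmetric expansion, I would present only one of the two components in detail and indicate that the other follows by swapping the roles of $(\g,\rho)$ and $(\h,\mu)$.
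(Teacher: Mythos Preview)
Your proposal is correct and follows the standard direct verification. Note, however, that the paper does not actually prove this proposition: it is introduced with ``The following result is well known'' and no argument is given. So your computation is strictly more than what the paper provides; there is nothing to compare against beyond observing that the paper treats this as background.
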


\begin{defi}\cite{Tak}
A matched pair of Lie groups consists of a pair of Lie groups $(G, \cdot_G), (H, \cdot_H)$, a left action of the Lie group $(G, \cdot_G)$ on the manifold $H$ and a right action of the Lie group $(H, \cdot_H)$ on the manifold $G$
\begin{equation*}
\Phi: G\times H\to H,\qquad (g,h)\mapsto  \Phi(g)h;\qquad \Theta:H\times G\to G\qquad (h,g)\mapsto \Theta(h)g,
\end{equation*}
such that
\begin{eqnarray*}
\label{mpg1} \Phi(g)(h_1\cdot_H h_2)&=&\Phi(g)h_1\cdot_H\Phi(\Theta(h_1)g)h_2;\\
\label{mpg2} \Theta(h)(g_1\cdot_G g_2)&=&\Theta(\Phi(g_2)h)g_1\cdot_G\Theta(h)g_2.
\end{eqnarray*}
Denote a matched pair of Lie groups by $(G,H;\Phi,\Theta)$.
\end{defi}
It is well known that there is a Lie group structure on $(G\times H, \cdot_{\bowtie})$ associated with $(G,H;\Phi,\Theta)$, such that $(G, \cdot_G)$ and $(H, \cdot_H)$ are Lie subgroups,
 where the group structure $\cdot_{\bowtie}$ is given by
\begin{equation*}
(g_1, h_1)\cdot_{\bowtie}(g_2, h_2)=(\Theta(h_2)g_1\cdot_G g_2, h_1\cdot_H\Phi(g_1)h_2),
\end{equation*}
for all $g_1, g_2\in G, h_1, h_2\in H$.

\begin{defi}
Let $\g$ and $\h$ be Lie algebras. Let $G$ and $H$ be connected and simply connected Lie groups integrating $\g$ and $\h$. A matched pair of Lie algebras $(\g,\h;\rho,\mu)$ is called integrable if there is a matched pair of Lie groups $(G, H; \Phi, \Theta)$ such that the Lie algebra of the Lie group $(G\times H, \cdot_{\bowtie})$ associated to $(G, H; \Phi, \Theta)$ is the Lie algebra $(\g\oplus\h, [\cdot, \cdot]_{\bowtie})$ associated to $(\g,\h;\rho,\mu)$.
\end{defi}

Any matched pair of Lie groups gives rise to a matched pair of Lie algebras. However,  matched pairs of Lie algebras are not always integrable, more details can be found in \cite{Maj}.

By Proposition \ref{graphro}, it shows that $(Gr(B), [\cdot, \cdot]_\phi)$ is a subalgebra of the Lie algebra $(\g\ltimes_\phi\h, [\cdot, \cdot]_{\phi})$. Then we have the following proposition.

\begin{pro}\label{matched}
Let $(\g, \h, \phi, B)$ be a relative Rota-Baxter Lie algebra. Then $(\g, Gr(B); \bar{\phi}, \bar{\theta})$ is a matched pair of Lie algebras $(\g, [\cdot,\cdot]_\g)$ and $(Gr(B), [\cdot, \cdot]_\phi)$, where
$$
\bar{\phi}:\g\lon\gl(Gr(B))\quad  \text{and} \quad \bar{\theta}:Gr(B)\lon\gl(\g)
$$
are given by
\begin{eqnarray*}
\bar{\phi}(x)(B(v), v)&=&(B(\phi(x)v), \phi(x)v), \\
\bar{\theta}(B(v), v)(x)&=&B(\phi(x)v)+[B(v), x]_\g,\quad \forall x\in\g, v\in\h.
\end{eqnarray*}
In other words, $(\g\oplus Gr(B), [\cdot,\cdot]_{\bowtie})$ is a Lie algebra, where $[\cdot,\cdot]_{\bowtie}$ is given by
\begin{eqnarray*}
&&[\Big(x, (B(u), u)\Big), \Big(y, (B(v), v)\Big)]_{\bowtie}\\
&=&\Big([x, y]_\g+\bar{\theta}(B(u), u)y-\bar{\theta}(B(v), v)x, \bar{\phi}(x)(B(v), v)-\bar{\phi}(y)(B(u), u)+[(B(u), u), (B(v), v)]_\phi\Big).
\end{eqnarray*}
\end{pro}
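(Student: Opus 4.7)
My plan is to derive the matched pair structure from an abstract source rather than verifying all four compatibility axioms ``by hand''. The key observation is that the underlying vector space of $(\g\ltimes_\phi\h,[\cdot,\cdot]_\phi)$ admits a direct sum decomposition
\[
\g\ltimes_\phi\h \;=\; (\g\times 0)\oplus Gr(B),
\]
since every $(x,u)\in\g\oplus\h$ is written uniquely as $(x-B(u),0)+(B(u),u)$. Both summands are Lie subalgebras of $(\g\ltimes_\phi\h,[\cdot,\cdot]_\phi)$: the first is obvious, while the second is Proposition \ref{graphro}. Thus I am in the standard situation (see \cite{Maj}) in which a Lie algebra decomposes as the vector space direct sum of two complementary subalgebras. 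This automatically yields a matched pair $(\g,Gr(B);\bar\phi,\bar\theta)$ whose associated Lie algebra is exactly $(\g\ltimes_\phi\h,[\cdot,\cdot]_\phi)$, with the actions determined by decomposing the mixed brackets along the two summands.

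The main step is therefore to compute these mixed brackets and read off the formulas for $\bar\phi$ and $\bar\theta$. For $x\in\g$ and $(B(v),v)\in Gr(B)$, a direct unwinding of $[\cdot,\cdot]_\phi$ gives
\[
[(x,0),(B(v),v)]_\phi \;=\; ([x,B(v)]_\g,\,\phi(x)v).
\]
To recover the matched pair data, I split the right hand side as $(z,0)+(B(w),w)$ with $z\in\g$, $w\in\h$. Reading off $w=\phi(x)v$ forces
\[
(B(w),w)=(B(\phi(x)v),\phi(x)v)
\]
which is precisely the stated $\bar\phi(x)(B(v),v)$; and then
\[
z \;=\; [x,B(v)]_\g-B(\phi(x)v) \;=\; -\bigl(B(\phi(x)v)+[B(v),x]_\g\bigr),
\]
identifying the $\g$-part with $-\bar\theta(B(v),v)x$ in the sign conventions of Proposition \ref{mglie}. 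Hence the two actions displayed in the statement are exactly those produced by the decomposition.

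Having identified $\bar\phi$ and $\bar\theta$, I still need to check that they are actually a representation of $\g$ on $Gr(B)$ and a representation of $Gr(B)$ on $\g$, respectively, and that the two mixed compatibility conditions of a matched pair are satisfied. However, this is automatic from the general principle above, because any bracket on $\g\oplus Gr(B)$ that restricts to the given brackets on each subalgebra and is compatible with the projections must satisfy the Jacobi identity if and only if the four matched-pair axioms hold; here the Jacobi identity on the left hand side is granted, since it is simply the Jacobi identity of $[\cdot,\cdot]_\phi$. In particular, the resulting Lie algebra $(\g\oplus Gr(B),[\cdot,\cdot]_\bowtie)$ of Proposition \ref{mglie} is nothing but $(\g\ltimes_\phi\h,[\cdot,\cdot]_\phi)$ transported along the vector space isomorphism $(x,(B(u),u))\mapsto(x+B(u),u)$.

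The only step that could be subtle is making sure the Rota-Baxter identity is actually used, and that the formula for $\bar\theta$ really lands in $\g$ rather than in $\g\ltimes_\phi\h$. The Rota-Baxter identity enters precisely in showing that $Gr(B)$ is closed under $[\cdot,\cdot]_\phi$, which is what makes the decomposition above into a decomposition by subalgebras; once this is used via Proposition \ref{graphro}, the rest of the argument is formal. I expect this to be the single line that carries the real content of the proposition; everything else is bookkeeping of the splitting, which I would present as a short computation right after stating the decomposition.
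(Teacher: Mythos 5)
Your proof is correct, but it takes a genuinely different route from the paper. The paper verifies the matched-pair axioms head-on: it quotes \cite{JSZ} for the fact that $\bar\theta$ is a representation and for one compatibility identity, then checks by direct computation that $\bar\phi$ is a representation and that the remaining compatibility condition $\bar\theta(B(u),u)[x,y]_\g=[\bar\theta(B(u),u)x,y]_\g+[x,\bar\theta(B(u),u)y]_\g+\bar\theta(\bar\phi(y)(B(u),u))x-\bar\theta(\bar\phi(x)(B(u),u))y$ holds, using the Rota--Baxter identity inside the computation. You instead invoke the standard equivalence between matched pairs and Lie algebras equipped with a vector-space splitting into two complementary subalgebras, applied to $\g\ltimes_\phi\h=(\g\times 0)\oplus Gr(B)$; the only nontrivial input is then Proposition \ref{graphro} (which is where the Rota--Baxter identity enters), and the formulas for $\bar\phi$ and $\bar\theta$ fall out of decomposing the mixed bracket $[(x,0),(B(v),v)]_\phi=([x,B(v)]_\g,\phi(x)v)$ along the splitting. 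Your sign bookkeeping matches the conventions of Proposition \ref{mglie} ($z=[x,B(v)]_\g-B(\phi(x)v)=-\bar\theta(B(v),v)x$), and as a bonus your argument delivers Proposition \ref{iso} (the isomorphism $\kappa$) for free, since $[\cdot,\cdot]_\bowtie$ is by construction the transport of $[\cdot,\cdot]_\phi$. The one thing to be careful about: the paper only records the direction ``matched pair $\Rightarrow$ Lie algebra on the sum'' as Proposition \ref{mglie}, whereas you need the converse (``complementary subalgebras $\Rightarrow$ matched pair''). That converse is indeed classical (it is in \cite{Maj} and \cite{LuW}), but you should cite it explicitly or state it as a lemma rather than calling it automatic, since it is the load-bearing step that replaces all of the paper's computations. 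The trade-off is clear: the paper's proof is self-contained modulo \cite{JSZ} but computational; yours is conceptual and shorter, at the cost of importing a general structure theorem.
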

\begin{proof}
By \cite[Proposition 2.7, Proposition 2.8]{JSZ}, it follows that $\bar{\theta}:Gr(B)\lon\gl(\g)$ is a representation of $Gr(B)$ on $\g$ and
\begin{eqnarray*}
&&\bar{\phi}(x)[(B(u), u), (B(v), v)]_\phi\\
&=&[\bar{\phi}(x)(B(u), u), (B(v), v)]_\phi+[(B(u), u), \bar{\phi}(x)(B(v), v)]_\phi+\bar{\phi}(\bar{\theta}(B(v), v)x)(B(u), u)\\
&&-\bar{\phi}(\bar{\theta}(B(u), u)y)(B(v), v).
\end{eqnarray*}
Since
\begin{eqnarray*}
&&\bar{\phi}([x, y]_\g)(B(u), u)\\
&=&(B(\phi([x, y]_\g)u), \phi([x, y]_\g)u)\\
&=&(B(\phi(x)\phi(y)u), \phi(x)\phi(y)u)-(B(\phi(y)\phi(x)u), \phi(y)\phi(x)u)\\
&=&\bar{\phi}(x)\bar{\phi}(y)(B(u), u)-\bar{\phi}(y)\bar{\phi}(x)(B(u), u),
\end{eqnarray*}
it follows that $\bar{\phi}$ is a representation of $\g$ on $Gr(B)$.

Moreover, for any $x, y\in\g, u\in\h$, we have
\begin{eqnarray*}
&&[\bar{\theta}(B(u), u)x, y]_\g+[x, \bar{\theta}(B(u), u)y]_\g+\bar{\theta}\Big(\bar{\phi}(y)(B(u), u)\Big)x-\bar{\theta}\Big(\bar{\phi}(x)(B(u), u)\Big)y\\
&=&[B(\phi(x)u)+[B(u), x]_\g, y]_\g+[x, B(\phi(y)u)+[B(u), y]_\g]_\g+[B(\phi(y)u), x]_\g\\
&&+B(\phi(x)\phi(y)u)-[B(\phi(y)u), y]_\g-B(\phi(y)\phi(x)u)\\
&=&B(\phi([x, y]_\g)u)+[B(u), [x, y]_\g]_\g\\
&=&\bar{\theta}(B(u), u)[x, y]_\g,
\end{eqnarray*}
which implies that $(\g, Gr(B); \bar{\phi}, \bar{\theta})$ is a matched pair of Lie algebras.
\end{proof}

Define a linear map $\kappa: \g\oplus Gr(B)\lon \g\ltimes_\phi\h$ by
\begin{equation}\label{eqisomlm}
\kappa\Big(x, (B(u), u)\Big)=\Big(x+B(u), u\Big), \quad \forall x\in\g, u\in\h.
\end{equation}

\begin{pro}\label{iso}
With the above notations, $\kappa: \g\oplus Gr(B)\lon \g\ltimes_\phi\h$ is an isomorphism of Lie algebras.
\end{pro}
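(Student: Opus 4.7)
The plan is to prove Proposition \ref{iso} by a direct check: show that $\kappa$ is a linear bijection, and then verify it is a Lie algebra homomorphism by expanding both sides of the bracket identity and collapsing terms using the relative Rota-Baxter identity.

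First I would handle bijectivity. The inverse map is obvious: given $(y,u) \in \g \ltimes_\phi \h$, set $\kappa^{-1}(y,u) = (y - B(u), (B(u), u))$. A one-line computation shows $\kappa \circ \kappa^{-1} = \Id$ and $\kappa^{-1} \circ \kappa = \Id$, so $\kappa$ is a linear isomorphism.

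The main step is checking the bracket identity
\[
\kappa\bigl[\bigl(x, (B(u),u)\bigr), \bigl(y, (B(v),v)\bigr)\bigr]_{\bowtie}
= \bigl[\kappa(x,(B(u),u)),\, \kappa(y,(B(v),v))\bigr]_{\phi}.
\]
The right-hand side equals $[(x+B(u), u),(y+B(v),v)]_\phi$, which by the definition of the semidirect product bracket is
\[
\bigl([x+B(u), y+B(v)]_\g,\; \phi(x+B(u))v - \phi(y+B(v))u + [u,v]_\h\bigr).
\]
For the left-hand side I would plug in the formula from Proposition \ref{matched}. The $\g$-component of the $\bowtie$-bracket is $[x,y]_\g + \bar\theta(B(u),u)y - \bar\theta(B(v),v)x$, and the $Gr(B)$-component contributes $(B(\phi(x)v),\phi(x)v) - (B(\phi(y)u),\phi(y)u) + [(B(u),u),(B(v),v)]_\phi$. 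After applying $\kappa$, the first slot of the output becomes
\[
[x,y]_\g + B(\phi(y)u) + [B(u),y]_\g - B(\phi(x)v) - [B(v),x]_\g + B(\phi(x)v) - B(\phi(y)u) + [B(u),B(v)]_\g,
\]
which telescopes to $[x+B(u), y+B(v)]_\g$; the second slot becomes
\[
\phi(x)v - \phi(y)u + \phi(B(u))v - \phi(B(v))u + [u,v]_\h = \phi(x+B(u))v - \phi(y+B(v))u + [u,v]_\h.
\]

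The one nontrivial input is that the bracket $[(B(u),u),(B(v),v)]_\phi$ must actually lie in $Gr(B)$, i.e.\ that $[B(u),B(v)]_\g = B(\phi(B(u))v - \phi(B(v))u + [u,v]_\h)$; but this is precisely the relative Rota-Baxter identity, and is exactly what Proposition \ref{graphro} guarantees. There is no genuine obstacle here: the proof is essentially a bookkeeping exercise in which the Rota-Baxter relation ensures that all the $B$-terms in the two slots of $\kappa$ cancel correctly. Once the two sides are seen to agree, combining with linear bijectivity finishes the proof.
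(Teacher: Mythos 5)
Your proposal is correct and follows essentially the same route as the paper: expand both sides of the bracket identity using the formulas for $\bar{\phi}$ and $\bar{\theta}$ from Proposition \ref{matched}, observe that the relative Rota-Baxter identity makes the $Gr(B)$-component well defined, and watch the $B$-terms cancel to give $[(x+B(u),u),(y+B(v),v)]_\phi$. The only difference is cosmetic: you exhibit the explicit inverse $(y,u)\mapsto(y-B(u),(B(u),u))$, whereas the paper simply declares bijectivity obvious.
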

\begin{proof}
It is obvious that $\kappa:\g\oplus Gr(B)\lon \g\ltimes_\phi\h$ is bijective. For any $x,y\in\g, u, v\in\h$, we have
\begin{eqnarray*}
&&\kappa\Big([\Big(x, (B(u), u)\Big), \Big(y, (B(v), v)\Big)]_{\bowtie}\Big)\\
&=&\kappa\Big([x, y]+\bar{\theta}(B(u), u)y-\bar{\theta}(B(v), v)x, ~~\bar{\phi}(x)(B(v), v)-\bar{\phi}(y)(B(u), u)+[(B(u), u), (B(v), v)]_\phi\Big)\\
&=&\Big([x, y]_\g+[B(u), y]_\g+[x, B(v)]_\g+[B(u), B(v)]_\g, \phi(x)v-\phi(y)u+[u, v]+\phi(B(u))v-\phi(B(v))u\Big)\\
&=&[(x+B(u), u), (y+B(v), v)]_\phi\\
&=&[\kappa\Big(x, (B(u), u)\Big), \kappa\Big(y, (B(v), v)\Big)]_\phi.
\end{eqnarray*}
Thus $\kappa: \g\oplus Gr(B)\lon\g\ltimes_\phi\h$ is an isomorphism of Lie algebras.
\end{proof}
\emptycomment{
Let $(G, \cdot_G)$ and $(H, \cdot_H)$ be Lie groups, $\Phi:G\lon\Aut(H)$ be an action of $G$ on $H$. Consider the semidirect product Lie group $G\ltimes_{\Phi} H$, with multiplication $\cdot_{\Phi}$ given by
$$
(g_1, h_1)\cdot_\Phi(g_2, h_2)=(g_1\cdot_\Phi g_2, h_1\cdot_\Phi\Phi(g_1)h_2), \quad \forall g_i\in G, h_i\in H, i=1, 2.
$$
}
Let $(G, H, \Phi, \huaB)$ be a relative Rota-Baxter Lie group. Denote by $(\g, \h, \phi, B)$ the relative Rota-Baxter Lie algebra of $(G, H, \Phi, \huaB)$. By Proposition \ref{graphgroup},  $(Gr(\huaB), \cdot_\Phi)$ is a Lie group, where
$$
(\huaB(h_1), h_1)\cdot_\Phi(\huaB(h_2), h_2)=(\huaB(h_1\Phi(\huaB(h_1))h_2), h_1\cdot_H\Phi(\huaB(h_1))h_2), \quad \forall h_1, h_2\in H.
$$
\begin{pro}\label{matgroup}
With the above notations, $(G, Gr(\huaB); \bar{\Phi}, \bar{\Theta})$ is a matched pair of Lie groups, where
\begin{eqnarray*}
\bar{\Phi}(g)(\huaB(h), h)&=&(\huaB(\Phi(g)h), \Phi(g)h), \\
\bar{\Theta}(\huaB(h), h)g&=&(\huaB(\Phi(g)h))^{-1}\cdot_G g\cdot_G\huaB(h), \quad \forall g\in G, h\in H.
\end{eqnarray*}
Moreover, the matched pair of Lie algebras of $(G, Gr(\huaB); \bar{\Phi}, \bar{\Theta})$ is $(\g, Gr(B); \bar{\phi}, \bar{\theta})$, which is constructed in Proposition \ref{matched}.
\end{pro}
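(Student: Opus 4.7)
The plan is to verify, in order, (i) that $\bar{\Phi}$ defines a smooth left $G$-action on $Gr(\huaB)$ and $\bar{\Theta}$ defines a smooth right $Gr(\huaB)$-action on $G$; (ii) the two matched-pair compatibility axioms; and (iii) that differentiation at the identity recovers the matched pair of Lie algebras $(\g, Gr(B); \bar{\phi}, \bar{\theta})$ from Proposition \ref{matched}. Throughout, the key input is the Rota-Baxter relation $\huaB(h_1)\cdot_G\huaB(h_2) = \huaB(h_1\cdot_H\Phi(\huaB(h_1))h_2)$, together with its immediate consequence $\huaB(e_H) = e_G$ (obtained by setting $h_1 = h_2 = e_H$).

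For step (i), the action property of $\bar{\Phi}$ is immediate from $\Phi\colon G\to\Aut(H)$ being a group homomorphism. For $\bar{\Theta}$, the unit axiom uses $\huaB(e_H) = e_G$, and the associativity axiom $\bar{\Theta}(y_1\cdot_\Phi y_2)g = \bar{\Theta}(y_2)(\bar{\Theta}(y_1)g)$ with $y_i = (\huaB(h_i), h_i)$ reduces, after canceling middle factors, to the single identity
\[
\huaB(\Phi(g)h_1)\cdot_G\huaB(\Phi(g')h_2) = \huaB\big(\Phi(g)(h_1\cdot_H\Phi(\huaB(h_1))h_2)\big),
\]
where $g' = \bar{\Theta}(y_1)g$. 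This follows from the Rota-Baxter relation applied to the left-hand side, combined with the observation $\huaB(\Phi(g)h_1)\cdot_G g' = g\cdot_G\huaB(h_1)$, which is merely a rearrangement of the definition of $g'$.

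For step (ii), the first compatibility $\bar{\Phi}(g)(y_1\cdot_\Phi y_2) = \bar{\Phi}(g)y_1\cdot_\Phi \bar{\Phi}(\bar{\Theta}(y_1)g)y_2$ uses the same key Rota-Baxter identity as in step (i). The second compatibility $\bar{\Theta}(y)(g_1\cdot_G g_2) = \bar{\Theta}(\bar{\Phi}(g_2)y)g_1\cdot_G\bar{\Theta}(y)g_2$ is a telescoping of group elements: both sides simplify directly to $\huaB(\Phi(g_1\cdot_G g_2)h)^{-1}\cdot_G g_1\cdot_G g_2\cdot_G\huaB(h)$, using that $\Phi$ is a homomorphism.

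Step (iii) is where the main computational obstacle lies. Differentiating $\bar{\Phi}(g)(\huaB(h),h)$ in $h$ at $e_H$ along $v\in\h$ and then in $g$ at $e_G$ along $x\in\g$ gives $\bar{\phi}(x)(B(v),v) = (B(\phi(x)v),\phi(x)v)$ immediately. For $\bar{\theta}$, one extracts it from the $\g$-component of the bracket $[(x,0),(0,(B(v),v))]_{\bowtie}$ in the matched-pair Lie algebra of $G\times Gr(\huaB)$; writing
\[
\bar{\Theta}(\huaB(\exp(tv)),\exp(tv))\exp(sx)\cdot_G\exp(-sx) = \huaB(\Phi(\exp(sx))\exp(tv))^{-1}\cdot_G\Ad(\exp(sx))\huaB(\exp(tv))
\]
and applying the product rule for Lie-group-valued curves based at $e_G$, the $t$-derivative at $t=0$ equals $-B(\Phi(\exp(sx))_{*_{e_H}}v) + \Ad(\exp(sx))B(v)$, and a further $s$-derivative at $s=0$ yields $-B(\phi(x)v) + [x,B(v)]_\g = -\big(B(\phi(x)v) + [B(v),x]_\g\big)$. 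Comparing with the $\g$-component $-\bar{\theta}(B(v),v)x$ of this bracket as read off from the formula in Proposition \ref{matched} recovers $\bar{\theta}(B(v),v)x = B(\phi(x)v) + [B(v),x]_\g$, completing the identification.
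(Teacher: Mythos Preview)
Your proof is correct and follows essentially the same route as the paper's. Two minor differences worth noting: the paper does not verify that $\bar{\Theta}$ is a right action from scratch but instead cites \cite[Theorem 3.8]{JSZ}, whereas you carry out the reduction to the Rota--Baxter identity directly; and for the differentiation yielding $\bar{\theta}$, the paper computes $\frac{d}{ds}\frac{d}{dt}\big|_{0,0}\bar{\Theta}(\exp(-s(B(u),u)))\exp_G(tx)$ head-on, while you obtain the same quantity (up to sign) by reading off the $\g$-component of $\Ad_{(\exp_G(sx),e)}(0,(B(v),v))$ and identifying it with $-\bar{\theta}(B(v),v)x$ via the matched-pair bracket formula. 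Both computations rely only on the first-order behavior of the curve $t\mapsto(\huaB(\exp_H(tv)),\exp_H(tv))$ in $Gr(\huaB)$, so the fact that this curve need not be the one-parameter subgroup of $Gr(\huaB)$ is harmless.
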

\begin{proof}
By \cite[Theorem 3.8]{JSZ}, it follows that $\bar{\Theta}$ is a right action of $Gr(\huaB)$ on the manifold $G$. It is straightforward to deduce that $\bar{\Phi}$ is a left action of $G$ on the manifold $Gr(\huaB)$. For any $g_1, g_2\in G$ and $h_1, h_2\in H$, we have
\begin{eqnarray*}
&&\bar{\Phi}(g_1)(\huaB(h_1), h_1)\cdot_{\Phi}\bar{\Phi}(\bar{\Theta}(\huaB(h_1), h_1)g_1)(\huaB(h_2), h_2)\\
&=&\Big(\huaB(\Phi(g_1)h_1), \Phi(g_1)h_1\Big)\cdot_\Phi\\
&&\Big(\huaB(\Phi(\huaB(\Phi(g_1)h_1)^{-1}\cdot_G g_1\cdot_G\huaB(h_1))h_2),\Phi(\huaB(\Phi(g_1)h_1)^{-1}\cdot_G g_1\cdot_G\huaB(h_1))h_2\Big)\\
&=&\Big(\huaB\big(\Phi(g_1)h_1\cdot_H\Phi(g_1\cdot_G\huaB(h_1))h_2\big), \Phi(g_1)h_1\cdot_H\Phi(g_1\cdot_G\huaB(h_1))h_2\Big)\\
&=&\Big(\huaB(\Phi(g_1)(h_1\cdot_H\Phi(\huaB(h_1))h_2)), \Phi(g_1)(h_1\cdot_H\Phi(\huaB(h_1))h_2)\Big)\\
&=&\bar{\Phi}(g_1)\Big((\huaB(h_1), h_1)\cdot_{\Phi}(\huaB(h_2), h_2)\Big),
\end{eqnarray*}
and
\begin{eqnarray*}
&&\bar{\Theta}(\bar{\Phi}((g_2)\huaB(h_1), h_1))g_1\cdot_G\bar{\Theta}(\huaB(h_1), h_1)g_2\\
&=&\huaB(\Phi(g_1\cdot_G g_2)h_1)^{-1}\cdot_G g_1\cdot_G\huaB(\Phi(g_2)h_1)\cdot_G(\huaB(\Phi(g_2)h_1))^{-1}\cdot_G g_2\cdot_G\huaB(h_1)\\
&=&\bar{\Theta}(\huaB(h_1), h_1)(g_1\cdot_G g_2).
\end{eqnarray*}
Thus, $(G, Gr(\huaB); \bar{\Phi}, \bar{\Theta})$ is a matched pair of Lie groups.

Moreover, $(\g, [\cdot, \cdot]_\g)$ and $(Gr(B), [\cdot, \cdot]_{\phi})$ are Lie algebras of $(G, \cdot_G)$ and $(Gr(\huaB), \cdot_\Phi)$ respectively. Denote by $\exp, \exp_G$ and $\exp_H$ the exponential map of $(G\times Gr(\huaB), \cdot_{\bowtie}), (G, \cdot_G)$ and $(H, \cdot_H)$ respectively. For any $x\in\g, (B(u), u)\in Gr(B)$, we have
\begin{eqnarray*}
&&\frac{d}{ds}\frac{d}{dt}|_{t=0, s=0}\bar{\Phi}(\exp(sx))\exp(tB(u), tu)\\
&=&\frac{d}{ds}\frac{d}{dt}|_{t=0, s=0}\Big(\huaB(\Phi(\exp_G(sx))\exp_H(tu)), \Phi(\exp_G(sx))\exp_H(tu)\Big)\\
&=&(B(\phi(x)u), \phi(x)u)\\
&=&\bar{\phi}(x)(B(u), u),
\end{eqnarray*}
and
\begin{eqnarray*}
&&\frac{d}{ds}\frac{d}{dt}|_{t=0, s=0}\bar{\Theta}(\exp(sB(-u), -su))\exp(tx)\\
&=&\frac{d}{ds}\frac{d}{dt}|_{t=0, s=0}\huaB(\Phi(\exp_G(tx))\exp_H(-su))^{-1}\cdot_G\exp_G(tx)\cdot_G\huaB(\exp_H(-su))\\
&=&\frac{d}{ds}\frac{d}{dt}|_{t=0, s=0}\huaB(\exp_H(-su))^{-1}\cdot_G\exp_G(tx)\cdot_G\huaB(\exp_H(-su))\\
&&+\frac{d}{ds}\frac{d}{dt}|_{t=0, s=0}\huaB(\Phi(\exp_G(tx))\exp_H(-su))^{-1}\cdot_G\huaB(\exp_H(-su))\\
&=&[B(u), x]_\g+\frac{d}{dt}\frac{d}{ds}|_{t=0, s=0}\huaB(\Phi(\exp_G(tx))\exp_H(-su))^{-1}\cdot_G\huaB(\exp_H(-su))\\
&=&[B(u), x]_\g+B(\phi(x)u)\\
&=&\bar{\theta}(B(u),u)x.
\end{eqnarray*}
Thus, the matched pair of Lie algebras of $(G, Gr(\huaB); \bar{\Phi}, \bar{\Theta})$ is $(\g, Gr(B); \bar{\phi}, \bar{\theta})$.
\end{proof}

Let $(\g, \h, \phi, B)$ be a relative Rota-Baxter Lie algebra. Let $G$ and $H$ be the connected and simply connected Lie
groups integrating $\g$ and $\h$ respectively, and $\Phi: G\to \Aut(H)$ be the
integrated action. By Proposition \ref{graphro},   $(Gr(B), [\cdot, \cdot]_\phi)$ is a Lie subalgebra of $\g\ltimes_\phi\h$. Denote by $E$ the connected and simply connected Lie group corresponding to $Gr(B)$. By Proposition \ref{matched},   $(\g, Gr(B); \bar{\phi}, \bar{\theta})$ is a matched pair of Lie algebras $(\g, [\cdot,\cdot]_\g)$ and $(Gr(B), [\cdot, \cdot]_\phi)$.

\begin{thm}\label{intthm}
With the above notations, a relative Rota-Baxter Lie algebra $(\g, \h, \phi, B)$ can be integrated into a relative Rota-Baxter Lie group $(G, H, \Phi, \huaB)$ if and only if the matched pair $(\g, Gr(B); \bar{\phi}, \bar{\theta})$ can be integrated into a matched pair of Lie groups $(G, E; \hat{\Phi}, \hat{\Theta})$.
\end{thm}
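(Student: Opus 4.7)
The plan is to argue the two directions separately, leveraging Propositions \ref{matched}, \ref{iso}, and \ref{matgroup} together with Theorem \ref{proint2}, with the isomorphism $\kappa$ of Proposition \ref{iso} providing the bridge at the Lie algebra level that will be integrated by Lie's second theorem.

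For the forward direction, assume $(\g,\h,\phi,B)$ integrates to a relative Rota-Baxter Lie group $(G,H,\Phi,\huaB)$. By Proposition \ref{matgroup}, the quadruple $(G,Gr(\huaB);\bar{\Phi},\bar{\Theta})$ is a matched pair of Lie groups whose infinitesimal matched pair of Lie algebras coincides with $(\g,Gr(B);\bar{\phi},\bar{\theta})$. The map $h\mapsto(\huaB(h),h)$ is a diffeomorphism from $H$ onto $Gr(\huaB)$ (it is the inverse of the projection $P_H$), so $Gr(\huaB)$ is connected and simply connected; hence $Gr(\huaB)$ may be identified with the canonical $E$, and $(G,E;\hat{\Phi},\hat{\Theta}):=(G,Gr(\huaB);\bar{\Phi},\bar{\Theta})$ is the desired integration.

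For the converse, suppose the matched pair of Lie algebras $(\g,Gr(B);\bar{\phi},\bar{\theta})$ integrates to a matched pair of Lie groups $(G,E;\hat{\Phi},\hat{\Theta})$. Then $(G\times E,\cdot_{\bowtie})$ is a Lie group whose Lie algebra is $(\g\oplus Gr(B),[\cdot,\cdot]_{\bowtie})$. Since $G$ and $E$ are both connected and simply connected, so is the product manifold $G\times E$. By Proposition \ref{iso}, $\kappa:\g\oplus Gr(B)\to\g\ltimes_\phi\h$ is a Lie algebra isomorphism, and since both source and target integrate to connected and simply connected Lie groups (namely $G\times E$ and $G\ltimes_\Phi H$), Lie's second theorem produces a unique Lie group isomorphism $K:G\times E\to G\ltimes_\Phi H$ integrating $\kappa$.

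To finish, set $D:=K(\{e_G\}\times E)$. Because $\kappa$ sends $\{0\}\oplus Gr(B)$ bijectively onto $Gr(B)\subseteq\g\ltimes_\phi\h$, the subgroup $D$ is a closed connected Lie subgroup of $G\ltimes_\Phi H$ integrating $Gr(B)$. Likewise, since $\kappa(x,0)=(x,0)$, the subgroup $K(G\times\{e_E\})$ has tangent algebra $\g\times\{0\}$ and is therefore $G\times\{e_H\}$. The matched pair decomposition $G\times E=(G\times\{e_E\})\cdot_{\bowtie}(\{e_G\}\times E)$ with trivial intersection then transfers along the isomorphism $K$ to
\[
G\ltimes_\Phi H=(G\times e_H)\cdot_\Phi D,\qquad (G\times e_H)\cap D=\{(e_G,e_H)\}.
\]
Hence $(G\ltimes_\Phi H,D,G\times e_H)$ is a double Lie group, and Theorem \ref{proint2} yields a relative Rota-Baxter operator $\huaB:H\to G$ integrating $B$.

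The main obstacle is verifying that the Lie group $K(\{e_G\}\times E)$ produced from an abstract simply connected integration $E$ of $Gr(B)$ actually sits inside $G\ltimes_\Phi H$ as a \emph{closed} subgroup realizing the double-Lie-group decomposition required by Theorem \ref{proint2}; this is handled automatically once one notices that $K$ is a global isomorphism of Lie groups, so closedness, the trivial intersection, and the global factorization all transport along $K$ from the matched-pair side.
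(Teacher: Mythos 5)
Your proposal is correct and follows essentially the same route as the paper: the forward direction invokes Proposition \ref{matgroup} and the diffeomorphism $H\cong Gr(\huaB)$ to identify $Gr(\huaB)$ with the simply connected $E$, and the converse integrates the isomorphism $\kappa$ of Proposition \ref{iso} to a global Lie group isomorphism $K$, transports the matched-pair factorization to the double Lie group $(G\ltimes_\Phi H, D, G\times e_H)$, and concludes by Theorem \ref{proint2}. The only cosmetic difference is that the paper establishes $K|_{G\times e_E}=\Id$ directly rather than identifying $K(G\times\{e_E\})$ by its tangent algebra, which amounts to the same thing.
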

\begin{proof}
If the matched pair of Lie algebras $(\g, Gr(B); \bar{\phi}, \bar{\theta})$ can be integrated into a matched pair of Lie groups $(G, E; \hat{\Phi}, \hat{\Theta})$, we denote by $\cdot_{\bowtie}$ the group product on $G\times E$ associated with $(G, E; \hat{\Phi}, \hat{\Theta})$. By Proposition \ref{iso}, the map $\kappa: \g\oplus Gr(B)\lon\g\ltimes_\phi\h$ is a Lie algebra isomorphism and
$$
\kappa(x, 0)=(x, 0),\quad \kappa\Big(0, (B(u), u)\Big)=(B(u), u), \quad \forall x\in\g, u\in\h.
$$
Thus there is a unique Lie group isomorphism $$K: G\times E\lon G\ltimes_\Phi H,$$ such that $K_{*(e_{G}, e_{E})}=\kappa$. Since $G\times e_E$ is a connected closed Lie subgroup and $\kappa|_{\g\times 0}=\Id$, we have $K|_{G\times e_E}=\Id$. Moreover, since $E$ is the integration of $Gr(B)$, and by the fact that $\kappa\Big(0, (B(u), u)\Big)=(B(u), u)$, it follows that the Lie algebra of the Lie subgroup $K(e_G\times E)$ is
$$
\kappa(0\times Gr(B))=Gr(B)\subseteq\g\ltimes_\phi\h.
$$
By the fact that $e_G\times E$ is a closed set of $G\times E$, $$(G\times e_E)\cap (e_G\times E)=(e_G, e_E),\quad (e_G\times E)\cdot (G\times e_E)=G\times E,$$ and $$K\Big((e_G\times E)\cdot( G\times e_E)\Big)=K(e_G\times E)\cdot_{\ltimes} (G\times e_E),$$ it follows that
$$
(G\times e_E)\cap K(e_G\times E)=(e_G, e_H)\quad \text{and} \quad K(e_G\times E)\cdot_{\ltimes}(G\times e_E)=G\ltimes_{\Phi} H.
$$
By Theorem \ref{proint2}, it follows that the relative Rota-Baxter operator $B:\h\lon\g$ can be integrated into a relative Rota-Baxter operator $\huaB:H\lon G$.

Conversely, if a relative Rota-Baxter Lie algebra $(\g, \h, \phi, B)$ can be integrated into a relative Rota-Baxter Lie group $(G, H, \Phi, \huaB)$, by Proposition \ref{graphgroup} and $Gr(\huaB)$ is diffeomorphic with $H$, we have that $Gr(\huaB)$ is a connected and simply connected Lie group integrating $Gr(B)$. By Proposition \ref{matgroup}, it follows that $(G, Gr(\huaB); \hat{\Phi}, \hat{\Theta})$ is a matched pair of Lie groups integrating $(\g, Gr(B); \bar{\phi}, \bar{\theta})$, where
\begin{eqnarray*}
\bar{\Phi}(g)(\huaB(h), h)&=&(\huaB(\Phi(g)h), \Phi(g)h), \\
\bar{\Theta}(\huaB(h), h)g&=&(\huaB(\Phi(g)h))^{-1}\cdot_G\g\cdot_G\huaB(h),
\end{eqnarray*}
for all $\g\in G, h\in H$.
\end{proof}

\begin{cor}\label{Cor1}
With above notations, if $G$ or $E$ is compact, a relative Rota-Baxter Lie algebra $(\g, \h, \phi, B)$ can be integrated to a relative Rota-Baxter Lie group $(G, H, \Phi, \huaB)$.
\end{cor}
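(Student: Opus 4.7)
By Theorem~\ref{intthm}, integrability of $(\g,\h,\phi,B)$ is equivalent to integrability of the matched pair of Lie algebras $(\g, Gr(B); \bar\phi, \bar\theta)$ into a matched pair of Lie groups $(G, E; \hat\Phi, \hat\Theta)$, where $E$ denotes the connected simply connected Lie group with Lie algebra $Gr(B)$. Applying Theorem~\ref{proint2} with $E$ in place of ``$H$'' reduces this to showing that $(G\ltimes_\Phi H, D, G\times e_H)$ is a double Lie group, where $D=\iota_E(E)$ for the Lie group homomorphism $\iota_E:E\to G\ltimes_\Phi H$ integrating the inclusion $Gr(B)\hookrightarrow\g\ltimes_\phi\h$.

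The plan is to show that the smooth map
\[
\Psi:G\times E\longrightarrow G\ltimes_\Phi H,\qquad (g,\tilde e)\mapsto (g,e_H)\cdot_\Phi\iota_E(\tilde e)
\]
is a diffeomorphism, from which the three double-Lie-group conditions drop out simultaneously. By Proposition~\ref{iso}, the differential of $\Psi$ at $(e_G,e_E)$ coincides with the Lie-algebra isomorphism $\kappa$, so $\Psi$ is a local diffeomorphism at the identity; by the $G$-left and $E$-right equivariance of $\Psi$, it is in fact a local diffeomorphism everywhere, and its image is open in $G\ltimes_\Phi H$. The decisive step is closedness of the image. In the $E$-compact case, given $\Psi(g_n,\tilde e_n)\to x$, extract a subsequence with $\tilde e_n\to\tilde e\in E$; then $\iota_E(\tilde e_n)\to\iota_E(\tilde e)$ and $(g_n,e_H)\to x\cdot\iota_E(\tilde e)^{-1}$, and since $G\times e_H$ is closed in $G\ltimes_\Phi H$ the limit lies in $G\times e_H$, giving $x\in\Psi(G\times E)$. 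In the $G$-compact case one argues dually, using the fact that every smooth vector field on the compact manifold $G$ is complete to integrate the matched-pair action of $E$ on $G$ globally; this forces $\iota_E$ to be a closed embedding, so $D$ is closed, and closedness of the image of $\Psi$ follows from compactness of $G\times e_H$.

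Once $\Psi$ is open, closed, and non-empty in the connected $G\ltimes_\Phi H$, it is surjective. Its fibres are cosets of the subgroup $F:=\iota_E^{-1}(G\times e_H)\subseteq E$, discrete because its Lie algebra $Gr(B)\cap(\g\times 0)$ is zero; hence $\Psi$ is a covering map. Since $G$ and $H$ are simply connected, so is $G\ltimes_\Phi H$, and since $E$ is simply connected by definition, $G\times E$ is simply connected; therefore the covering $\Psi$ must be trivial, forcing $F=\{e_E\}$ and $\Psi$ to be a diffeomorphism. From this one reads off all three double-Lie-group conditions: $D=\Psi(\{e_G\}\times E)$ is closed; $D\cap(G\times e_H)=\{(e_G,e_H)\}$; and $(G\times e_H)\cdot_\Phi D=G\ltimes_\Phi H$, equivalent to $D\cdot_\Phi(G\times e_H)=G\ltimes_\Phi H$ by inversion. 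The principal obstacle is the $G$-compact case: unlike the $E$-compact case, $D$ need not be compact, and its closedness is recovered indirectly via the completeness of vector fields on the compact manifold $G$.
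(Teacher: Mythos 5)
Your proof follows a genuinely different route from the paper's: the paper disposes of the corollary in two lines by citing Majid's integration theorem for matched pairs with a compact factor \cite{Maj} and then invoking Theorem \ref{intthm}, whereas you argue directly through the double-Lie-group criterion of Theorem \ref{proint2} via a covering-space argument for the multiplication map $\Psi(g,\tilde e)=(g,e_H)\cdot_\Phi\iota_E(\tilde e)$. Your treatment of the $E$-compact case is correct and complete: $\Psi$ is a local diffeomorphism everywhere by equivariance and Proposition \ref{iso}, its image is closed by extracting a convergent subsequence in the compact factor, surjectivity follows from connectedness of $G\ltimes_\Phi H$, and discreteness of $F=\iota_E^{-1}(G\times e_H)$ together with simple connectedness of both $G\times E$ and $G\ltimes_\Phi H$ forces $\Psi$ to be a diffeomorphism, from which the three double-Lie-group conditions follow. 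This is a nice self-contained proof of that half, and it buys something the paper's citation does not: an explicit mechanism (the covering $\Psi$) exhibiting where integrability could fail.

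The $G$-compact case, however, has a genuine gap at the step ``integrate the matched-pair action of $E$ on $G$ globally; this forces $\iota_E$ to be a closed embedding.'' A globally defined action of $E$ on the manifold $G$ says nothing, by itself, about the image of the homomorphism $\iota_E\colon E\lon G\ltimes_\Phi H$ being closed: the action could a priori factor through a quotient or have large stabilizers, and closedness of a connected immersed subgroup is exactly the kind of statement that fails in general (irrational lines in tori). The logical order should be reversed: use compactness of $G\times e_H$ to prove surjectivity of $\Psi$ \emph{first}, and obtain closedness of $D$ as an output of the covering argument rather than as an input. Concretely, for $\zeta\in Gr(B)$ and $k_0\in G\times e_H$ one seeks $k(t)\in G\times e_H$ and $\tilde e(t)\in E$ with $\exp(t\zeta)\,k_0=k(t)\,\iota_E(\tilde e(t))$; since $\Ad_k\big(Gr(B)\big)\oplus(\g\times 0)=\g\ltimes_\phi\h$ for every $k\in G\times e_H$, one may decompose $\zeta=\xi(k)+\Ad_k\eta(k)$ and reduce this to the ODE $\dot k\,k^{-1}=\xi(k)$ on the compact group $G\times e_H$ (globally solvable because $G$ is compact), followed by the equation $\dot{\tilde e}\,\tilde e^{-1}=\eta(k(t))$ prescribing a right logarithmic derivative on $E$ (always globally solvable on a Lie group). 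This yields $(G\times e_H)\cdot_\Phi D=G\ltimes_\Phi H$ directly, after which your covering-space argument applies verbatim and delivers that $\Psi$ is a diffeomorphism, hence that $D$ is closed and embedded. As written, though, the $G$-compact half of your proof does not go through; alternatively, that half can simply be delegated to \cite{Maj} as the paper does.
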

\begin{proof}
Since $G$ or $E$ is connected and simply connected Lie groups and $G$ or $E$ is compact, by \cite[Theorem 4.2]{Maj}, we have a matched pair of Lie group $(G, E; \hat{\Phi}, \hat{\Theta})$, such that the matched pair of Lie algebras of $(G, E; \hat{\Phi}, \hat{\Theta})$ is $(\g, Gr(B); \bar{\phi}, \bar{\theta})$. By Theorem \ref{intthm}, $B:\h\lon\g$ can be integrated to a relative Rota-Baxter operator on Lie groups.
\end{proof}

Let $(\g, \h, \phi, B)$ be a relative Rota-Baxter Lie algebra. By Proposition \ref{matched} and \ref{mglie}, it means that $(\g, Gr(B); \bar{\phi}, \bar{\theta})$ is a matched pair, and $(\g\oplus Gr(B), [\cdot, \cdot]_{\bowtie})$ is a Lie algebra. Denote by $T, G$ and $E$ the connected and simply connected Lie groups corresponding to $\g\oplus Gr(B), \g$ and $Gr(B)$ respectively. Define linear maps $\zeta_1: \g\lon\g\oplus Gr(B)$ and $\zeta_2: Gr(B)\lon\g\oplus Gr(B)$ by
$$
\zeta_1(x)=(x, 0), \quad\forall x\in\g, \quad\quad \zeta_2(B(u), u)=(0, (B(u), u)), \quad \forall (B(u), u)\in Gr(B),
$$
which are  Lie algebra homomorphisms. Denote by $\Xi_1: G\lon T$ and $\Xi_2: E\lon T$ the Lie group homomorphisms corresponding to $\zeta_1$ and $\zeta_2$. Define a smooth map $\Upsilon:G\times E\lon T$ by
$$
\Upsilon(g, f)=\Xi_1(g)\cdot\Xi_2(f), \quad \forall g\in G, f\in E.
$$
\begin{pro}\label{diffmap-1}
With the above notations, the relative Rota-Baxter Lie algebra $(\g, \h, \phi, B)$ is integrable if and only if $\Upsilon:G\times E\lon T$ is a diffeomorphism.
\end{pro}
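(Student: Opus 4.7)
The plan is to reduce the statement to Theorem~\ref{intthm} and then translate the existence of an integrating matched pair of Lie groups into the factorization property of the simply connected integration $T$ of $\g\oplus Gr(B)$. By Theorem~\ref{intthm}, it suffices to prove that the matched pair of Lie algebras $(\g, Gr(B); \bar{\phi}, \bar{\theta})$ integrates to a matched pair of Lie groups $(G, E; \hat{\Phi}, \hat{\Theta})$ if and only if $\Upsilon$ is a diffeomorphism. This is a standard kind of statement: a matched pair on two simply connected Lie groups exists precisely when the ambient bicrossed product Lie algebra integrates to a Lie group that globally factors as the product of the two factors.

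For the forward direction, suppose $(G, E; \hat{\Phi}, \hat{\Theta})$ is the integrating matched pair. Then $(G\times E, \cdot_{\bowtie})$ is a Lie group whose Lie algebra is $(\g\oplus Gr(B), [\cdot,\cdot]_{\bowtie})$, and it is connected and simply connected (as a product of connected simply connected manifolds). By Lie's second theorem there is a unique Lie group isomorphism $\Psi\colon G\times E\to T$ integrating the identity on $\g\oplus Gr(B)$. The restrictions of $\Psi$ to $G\times\{e_E\}$ and $\{e_G\}\times E$ integrate $\zeta_1$ and $\zeta_2$, so by uniqueness $\Psi(g,e_E)=\Xi_1(g)$ and $\Psi(e_G,f)=\Xi_2(f)$. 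Since $(g,f)=(g,e_E)\cdot_{\bowtie}(e_G,f)$ in $G\times E$, multiplicativity of $\Psi$ yields $\Psi(g,f)=\Xi_1(g)\cdot\Xi_2(f)=\Upsilon(g,f)$. Hence $\Upsilon=\Psi$ is a diffeomorphism.

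For the reverse direction, assume $\Upsilon$ is a diffeomorphism. Transport the group structure of $T$ back along $\Upsilon$: set $(g_1,f_1)\ast(g_2,f_2):=\Upsilon^{-1}\bigl(\Upsilon(g_1,f_1)\cdot\Upsilon(g_2,f_2)\bigr)$. This turns $G\times E$ into a Lie group whose Lie algebra is $(\g\oplus Gr(B),[\cdot,\cdot]_{\bowtie})$. Because $\Upsilon$ is a diffeomorphism and $\Xi_1(g)=\Upsilon(g,e_E)$, $\Xi_2(f)=\Upsilon(e_G,f)$, both $\Xi_1$ and $\Xi_2$ are injective closed embeddings, so $G\times\{e_E\}$ and $\{e_G\}\times E$ are closed Lie subgroups of $(G\times E,\ast)$ which intersect trivially and whose product covers $G\times E$. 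Thus $(G\times E;\,G\times\{e_E\},\,\{e_G\}\times E)$ is a double Lie group, which by the classical Lu--Weinstein/Majid correspondence is equivalent to a matched pair of Lie groups $(G,E;\hat{\Phi},\hat{\Theta})$ whose associated group product is precisely $\ast$. Differentiating this matched pair of Lie groups yields a matched pair of Lie algebras on $(\g, Gr(B))$ whose bicrossed product Lie bracket coincides with $[\cdot,\cdot]_{\bowtie}$; by the naturality of the construction (Proposition~\ref{matgroup}-type calculations) this differentiated matched pair is $(\g, Gr(B);\bar{\phi},\bar{\theta})$. An application of Theorem~\ref{intthm} finishes the proof.

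The main subtlety I expect is the reverse direction: checking that the matched pair of Lie algebras obtained from differentiating the group-theoretic actions $\hat{\Phi},\hat{\Theta}$ coming from the double Lie group structure on $T$ really reproduces the original $\bar{\phi},\bar{\theta}$. This amounts to verifying that the infinitesimal left/right actions extracted from the factorization $\Upsilon(g,f)=\Xi_1(g)\cdot\Xi_2(f)$ agree with the formulas in Proposition~\ref{matched}, which reduces to a computation with one-parameter subgroups similar to the last part of the proof of Proposition~\ref{matgroup}. The embedding/closedness of $\Xi_1(G)$ and $\Xi_2(E)$ in $T$ is immediate from the fact that $\Upsilon$ is a diffeomorphism, so no further topological obstruction arises.
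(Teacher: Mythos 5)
Your strategy is the same as the paper's: in the forward direction you identify $T$ with the bicrossed product Lie group on $G\times E$ and read $\Upsilon$ off the multiplication, and in the reverse direction you transport the group structure of $T$ to $G\times E$ along $\Upsilon$, observe that the two factors become closed subgroups realizing a double Lie group, and feed the resulting matched pair into Theorem \ref{intthm}. The reverse direction is correct and essentially verbatim what the paper does (the paper is, if anything, terser than you are about checking that the differentiated actions recover $\bar{\phi},\bar{\theta}$).

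There is, however, one concrete error in your forward direction. With the paper's convention $(g_1,f_1)\cdot_{\bowtie}(g_2,f_2)=\bigl(\hat{\Theta}(f_2)g_1\cdot_G g_2,\ f_1\,\hat{\Phi}(g_1)f_2\bigr)$, one computes $(g,e_E)\cdot_{\bowtie}(e_G,f)=\bigl(\hat{\Theta}(f)g,\ \hat{\Phi}(g)f\bigr)$, whereas it is $(e_G,f)\cdot_{\bowtie}(g,e_E)$ that equals $(g,f)$. Hence your identity $(g,f)=(g,e_E)\cdot_{\bowtie}(e_G,f)$ fails in general, and multiplicativity of $\Psi$ actually gives $\Psi(g,f)=\Xi_2(f)\cdot\Xi_1(g)$, which is not $\Upsilon(g,f)$; so $\Psi\neq\Upsilon$. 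The conclusion survives, but you need one more step: either note that $\Upsilon(g,f)=\bigl(\Psi(g^{-1},f^{-1})\bigr)^{-1}$, which exhibits $\Upsilon$ as a composition of diffeomorphisms, or invoke the standard fact that in a double Lie group a global smooth factorization $K=BA$ with $A\cap B=\{e\}$ yields a global smooth factorization $K=AB$ as well. With that one-line repair the forward direction is correct and matches the paper, which likewise asserts (with a comparable amount of justification) that its explicit formula for $\Upsilon$ is a diffeomorphism.
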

\begin{proof}
If $(\g, \h, \phi, B)$ is integrated to $(G, H, \Phi, \huaB)$, then $E=Gr(\huaB)$. By Proposition \ref{matgroup}, we obtain that $(G, Gr(\huaB); \bar{\Phi}, \bar{\Theta})$ is a matched pair, and $(G\times E, \cdot_{\bowtie})$ is a Lie group. Moreover,
$$
\Xi_1(g)=(g, e_E), \quad \forall g\in G, \quad \Xi_2(\huaB(h), h)=(e_G, (\huaB(h), h)), \quad \forall (\huaB(h), h)\in E.
$$
Thus $\Upsilon(g, (\huaB(h), h))=(\huaB(\Phi(g)h)^{-1}\cdot_G g\cdot_G\huaB(h), (\huaB(\Phi(g)h), \Phi(g)h))$ is a diffeomorphism.

Conversely, if $\Upsilon:G\times E\lon T$ is a diffeomorphism, we can define a Lie group structure on $G\times E$ by
$$
(g_1, f_1)\cdot (g_2, f_2)=\Upsilon^{-1}(\Upsilon(g_1, f_1)\cdot\Upsilon(g_2, f_2)), \quad (g_i, f_i)\in G\times E.
$$
Moreover $G$ and $E$ are Lie subgroups of $G\times E$, which means that $G\times E$ is a matched pair of $G$ and $E$.
Since $\Upsilon_{*(e_G, e_E)}=\Id$ and $\Upsilon:G\times E\lon T$ is a Lie group homomorphism, then the Lie algebra structure $[\cdot, \cdot]$ on $\g\oplus Gr(B)$ is given by
$$
[(x, (B(u), u)), (y, (B(v), v))]=[(x, (B(u), u)), (y, (B(v), v))]_{\bowtie}.
$$
Thus it follows that $(\g, \h, \phi, B)$ is integrable from Theorem \ref{intthm}.
\end{proof}

Given a relative Rota-Baxter Lie algebra $(\g, \h, \phi, B)$, there is a double Lie groupoid (see for example \cite[Definition 3.1]{TangM}) given by the square
\begin{equation} \label{eq:double}
     \xymatrix{\Gamma\ar@<-0.5 ex>[r]_{s^h}\ar@<0.5 ex>[r]^{t^h}\ar@<-0.5 ex>[d]_{t^v}\ar@<0.5 ex>[d]^{s^v}& E\ar@<-0.5 ex>[d]\ar@<0.5 ex>[d]\\
     G\ar@<-0.5 ex>[r]\ar@<0.5 ex>[r]&pt}
\end{equation}
where $$\Gamma=\{(h_2,a_2,a_1,h_1)\in E\times G\times G\times E | \Xi_2(h_2)\Xi_1(a_1)=\Xi_1(a_2)\Xi_2(h_1)\},$$ and the  horizontal source, target, and multiplication are given by
\begin{equation*}
     s^h(\xi)=h_1, \ t^h(\xi)=h_2, \ m^h(\xi, \xi')=(h_2, a_2a_2', a_1a_1', h_1'),
\end{equation*}
 and the vertical source, target, and multiplication are given by
  \begin{equation*}
     s^v(\xi)=a_1, \ t^v(\xi)=a_2, \ m^v(\xi, \xi')=(h_2h_2', a_2, a_1', h_1h_1'),
 \end{equation*}
 for $\xi=(h_2,a_2,a_1,h_1)$ and  $\xi'=(h_2',a_2',a_1',h_1')\in\Gamma$. The Artin-Mazur codiagonal $\bar{\Gamma}_\bullet$ of the double groupoid \eqref{eq:double} is given  by
  $$
\bar{\Gamma}_\bullet=\cdots G\times\Gamma\times E\aaar G\times E\rightrightarrows pt.
 $$
As proven in \cite{TangM}, the Artin-Mazur codiagonal of a double Lie groupoid is always a local Lie 2-groupoid. Thus $(\bar{\Gamma}_\bullet, d_\bullet, s_\bullet)$ is a local Lie $2$-group. Here for the concept of local Lie $n$-groupoids and local Lie $n$-groups\footnote{A local Lie $n$-group is a pointed local Lie groupoid.}, we refer to \cite[Definition 1.1]{Zhu11}.

At the end of this subsection, we will show that if $(\g, \h, \phi, B)$ is integrable, then $\bar{\Gamma}_\bullet$ is a Lie $1$-group, that is the nerve of a Lie group. Moreover, even if $(\g, \h, \phi, B)$ is not integrable, $\bar{\Gamma}_\bullet$ is still Morita equivalent to a Lie 1-group  $NT_\bullet$ as local Lie 2-groups.

We first recall the definition of Morita equivalences of local Lie $n$-groupoids from \cite{CZhu}. Let $(X_\bullet, d_\bullet, s_\bullet)$ be a simplicial manifold. The boundary space or $i$-matching space of $X_\bullet$ is
$$
\partial_i(X_\bullet)=\{(x_0, \cdots, x_i)\in X_{i-1}^{i+1}|d_p(x_q)=d_q(x_p), \forall p<q\}.
$$
Let $X_\bullet$ and $Y_\bullet$ be  local Lie $n$-groupoids and $f_\bullet: X_\bullet\lon Y_\bullet$ be a simplicial morphism. If the following map
$$
q_i=((d_0, \cdots, d_i), f_i):~~ X_i\lon\partial_i(X_\bullet)\times_{\partial_i(Y_\bullet)}Y_i
$$
is submersion for $0\leq i\leq n-1$ and injective etale when $i=n$, then  $f_\bullet: X_\bullet\lon Y_\bullet$ is called a hypercover.
\begin{defi}\cite{CZhu}
Two local Lie $n$-groupoids $X_\bullet$ and $Y_\bullet$ are Morita equivalent if there is another local Lie $n$-groupoid $Z_\bullet$ and hypercovers $f_\bullet: Z_\bullet\lon X_\bullet, g_\bullet: Z_\bullet\lon Y_\bullet$.
\end{defi}
Denote by $NT_\bullet$ the never of $T$. Define smooth maps $\Upsilon_\bullet:\bar{\Gamma}_\bullet\lon NT_\bullet$ by
\begin{eqnarray*}
\Upsilon_1(a, h)&=&\Upsilon(a, h), \quad \forall (a, h)\in G\times E\\
\Upsilon_2(a_3, (h_3,a_2,a_1,h_2), h_1)&=&(\Upsilon(a_3, h_3),\Upsilon(a_1, h_1)), \quad \forall a_3\in G, h_1\in E, (h_3,a_2,a_1,h_2)\in\Gamma.
\end{eqnarray*}
\begin{pro}\label{hp1}
With the above notations, $\Upsilon_\bullet:\bar{\Gamma}_\bullet\lon NT_\bullet$ is a hypercover of local Lie $2$-groups, $\bar{\Gamma}_\bullet$ and $NT_\bullet$ are Morita equivalent. In particular, if $(\g, \h, \phi, B)$ is integrable, then there are diffeomorphisms $\Upsilon_\bullet:\bar{\Gamma}_\bullet\lon NT_\bullet$, thus $\bar{\Gamma}_\bullet$ is a Lie $1$-group.
\end{pro}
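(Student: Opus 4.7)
The strategy is to work entirely through the map $\Upsilon: G\times E\lon T$ of Proposition \ref{diffmap-1}, exploit that it is always a local diffeomorphism, and then promote this fact to the level of the simplicial manifolds $\bar{\Gamma}_\bullet$ and $NT_\bullet$.

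First, I would verify that $\Upsilon_\bullet$ is a morphism of simplicial manifolds, i.e.\ that it commutes with all face and degeneracy maps. The faces of $\bar{\Gamma}_\bullet$ at level two extract the three edges $(a_3,h_3)$, $(a_3a_2,h_2h_1)$ or its analogue, and $(a_1,h_1)$ out of $(a_3,(h_3,a_2,a_1,h_2),h_1)$, using the horizontal and vertical multiplications of the double groupoid \eqref{eq:double}. On $NT_\bullet$ the faces are multiplication in $T$. The defining equation $\Xi_2(h_2)\Xi_1(a_1)=\Xi_1(a_2)\Xi_2(h_1)$ of $\Gamma$ is precisely the relation needed for $\Upsilon(a_3,h_3)\cdot \Upsilon(a_1,h_1)$ to admit the correct decomposition in $T$, so simpliciality is built in.

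Second, I would show that $\Upsilon$ is a local diffeomorphism. Since $G\times E$ and $T$ share the Lie algebra $\g\oplus Gr(B)$, the differential of $\Upsilon$ at $(e_G,e_E)$ equals the isomorphism $\kappa$ of Proposition \ref{iso}. Because $\Upsilon$ intertwines left multiplication by $G$ (resp.\ right multiplication by $E$) with left multiplication by $\Xi_1(G)$ (resp.\ right multiplication by $\Xi_2(E)$) in $T$, this local diffeomorphism property spreads by translation to every point of $G\times E$, and surjectivity follows by connectedness of $T$.

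Third, I would check the hypercover conditions for $\Upsilon_\bullet$. At level zero both sides are a point, so nothing is needed. At level one, the matching spaces $\partial_1(\bar{\Gamma}_\bullet)$ and $\partial_1(NT_\bullet)$ are both points, so $q_1$ reduces to $\Upsilon_1=\Upsilon$, which is a submersion by the second step. At level two, $q_2$ takes the form
\begin{equation*}
q_2:\ G\times \Gamma\times E \ \lon\ \bigl(G\times E\bigr)^{3}\times_{T^{3}}\bigl(T\times T\bigr),
\end{equation*}
recording the three boundary edges together with the filler 2-simplex in $T$. Because $\Upsilon$ is a local diffeomorphism, the three edges determine the two elements of $T\times T$ that constitute a filler, while the constraint $\Xi_2(h_2)\Xi_1(a_1)=\Xi_1(a_2)\Xi_2(h_1)$ defining $\Gamma$ is exactly the compatibility that forces the unique local lift back to $\bar{\Gamma}_2$. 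Injectivity and etaleness of $q_2$ thus reduce to the invertibility of $\Upsilon_{*}$ and to the fact that, on sufficiently small neighbourhoods, the equation $\Xi_2(h)\Xi_1(a)=\Xi_1(a')\Xi_2(h')$ uniquely determines the pair $(a',h')$ from $(a,h)$.

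Fourth, the Morita equivalence is immediate from the definition: take $Z_\bullet=\bar{\Gamma}_\bullet$, the identity hypercover on one side and $\Upsilon_\bullet$ on the other. Finally, if $(\g,\h,\phi,B)$ is integrable, Proposition \ref{diffmap-1} upgrades $\Upsilon$ to a global diffeomorphism $G\times E\cong T$. Then $\Upsilon_1$ is a diffeomorphism, and $\Gamma$ becomes the graph of the unique global factorization $T=\Xi_1(G)\Xi_2(E)=\Xi_2(E)\Xi_1(G)$, so $\Upsilon_2$ (and all higher $\Upsilon_n$) become diffeomorphisms as well. This yields an isomorphism $\bar{\Gamma}_\bullet\cong NT_\bullet$ of simplicial manifolds, so $\bar{\Gamma}_\bullet$ is the nerve of the Lie group $T$ and hence a Lie $1$-group.

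The main obstacle will be the third step, in particular the careful verification that $q_2$ is not merely a submersion but is injective and etale. Because $\bar{\Gamma}_\bullet$ is in general only a local Lie $2$-groupoid and the factorization $T=\Xi_1(G)\Xi_2(E)$ is only local, the argument must be executed in small enough open neighbourhoods on which $\Upsilon$ restricts to a diffeomorphism, and then globalised using the translation invariance noted in Step~2.
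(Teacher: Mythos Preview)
Your overall strategy matches the paper's, but you are making Step~3 harder than necessary and in doing so introduce a gap. The paper observes that $q_2$ is a \emph{global} diffeomorphism, with explicit inverse
\[
q_2^{-1}\bigl((a_1,h_1),(a_2,h_2),(a_3,h_3),t_1,t_2\bigr)=\bigl(a_3,(h_3,\,a_3^{-1}a_2,\,a_1,\,h_2h_1^{-1}),h_1\bigr).
\]
The point you are missing is that the fiber product $\partial_2(\bar{\Gamma}_\bullet)\times_{\partial_2(NT_\bullet)}NT_2$ already records the three boundary edges as elements of $G\times E$ (not of $T$), so reconstructing the preimage uses only the group operations in $G$ and in $E$ and never requires inverting $\Upsilon$. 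The $\Gamma$-constraint on the reconstructed quadruple $(h_3,a_3^{-1}a_2,a_1,h_2h_1^{-1})$ is exactly the fiber-product condition $\Upsilon_1(a_3,h_3)\cdot\Upsilon_1(a_1,h_1)=\Upsilon_1(a_2,h_2)$ rewritten. Hence your ``main obstacle'' dissolves.

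Conversely, your proposed reduction --- ``injectivity and etaleness of $q_2$ reduce to the invertibility of $\Upsilon_*$ and to local uniqueness of the factorization $\Xi_2(h)\Xi_1(a)=\Xi_1(a')\Xi_2(h')$'' --- would yield \'etaleness but not, as stated, \emph{global} injectivity: a local diffeomorphism need not be injective, and patching local uniqueness statements does not produce a global one. The paper's explicit inverse bypasses this issue entirely. For the final clause (the integrable case) your argument is fine; the paper does the same, quoting Proposition~\ref{diffmap-1} and then \cite[Lemma~2.5]{Zhu11} to pass from $\Upsilon_1$ being a diffeomorphism to $\Upsilon_\bullet$ being an isomorphism of simplicial manifolds.
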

\begin{proof}
The boundary spaces are $$\partial_1(\bar{\Gamma}_\bullet)=pt\times pt, \quad
\partial_2(\bar{\Gamma}_\bullet)=(G\times E)\times(G\times E)\times(G\times E).
$$
In our case, the map $q_1: G\times E\lon NT_1$ is simply $\Upsilon_1$ and $q_2: \bar{\Gamma}_2\lon(G\times E)\times(G\times E)\times(G\times E)\times_{T\times T\times T}NT_2$ is given by
$$
q_2(a_3, (h_3,a_2,a_1,h_2), h_1)=\Big((a_1, h_1), (a_3a_2, h_2h_1), (a_3, h_3), \Upsilon_1(a_3, h_3), \Upsilon_1(a_1, h_1)\Big).
$$
Since $(\Upsilon_{1})_{*(a, h)}=-L_{\Xi_1(a)*}R_{\Xi_2(h)*}$ is invertible,  $\Upsilon_1$ is a local diffeomorphism thus in particular a submersion.
By a  direct calculation, $q_2$ is a diffeomorphism with inverse map $q^{-1}_2:(G\times E)\times(G\times E)\times(G\times E)\times_{T\times T\times T}NT_2\lon \bar{\Gamma}_2$ given by
$$
q_2^{-1}\Big((a_1, h_1), (a_2, h_2), (a_3, h_3), t_1, t_2\Big)=(a_3, (h_3, a_3^{-1}a_2, a_1, h_2h_1^{-1}), h_1).
$$
Therefore $q_2$ is in particular injective etale. Thus $\Upsilon_\bullet:\bar{\Gamma}_\bullet\lon NT_\bullet$ is a hypercover of local Lie $2$-groups.
\emptycomment{
Thus we have the following picture
\begin{equation*}
    \xymatrix{&\bar{\Gamma}_\bullet\ar[dr]^{\Upsilon_\bullet}\ar[dl]_{id_\bullet}&&NT_\bullet\ar[dr]^{id_\bullet}\ar[dl]_{\iota_\bullet}\\
    \bar{\Gamma}_\bullet&&NT_\bullet&&NT_\bullet}
\end{equation*}
By \cite[Lemma 2.7]{Zhu11}, we have
\begin{equation*}
    \xymatrix{&&\bar{\Gamma}_\bullet\times_{NT_\bullet}NT_\bullet \ar[dl]_{p^1_\bullet}\ar[dr]^{p^2_\bullet}\\
    &\bar{\Gamma}_\bullet\ar[dr]^{\Upsilon_\bullet}\ar[dl]_{id_\bullet}&&NT_\bullet\ar[dr]^{id_\bullet}\ar[dl]_{\iota_\bullet}\\
    \bar{\Gamma}_\bullet&&NT_\bullet&&NT_\bullet}
\end{equation*}
By \cite[Lemma 2.6]{Zhu11},   $p^1_\bullet$ and $p^2_\bullet$ are hypercovers.}
Therefore,  $\bar{\Gamma}_\bullet$ and $NT_\bullet$ are Morita equivalent.

If $(\g, \h, \phi, B)$ is integrable, by Proposition \ref{diffmap-1}, $\Upsilon_1$ is a diffeomorphism. By \cite[Lemma 2.5]{Zhu11}, $\bar{\Gamma}_\bullet$ is isomorphic to $NT_\bullet$, which implies that $\bar{\Gamma}_\bullet$ is a Lie $1$-group.
\end{proof}

\begin{rmk}
As we have seen, when $(\g, \h, \phi, B)$ is integrable, the local Lie $2$-group $\bar{\Gamma}_\bullet$ is diffeomorphic to $NT_\bullet$. Thus when  $(\g, \h, \phi, B)$ is not integrable, the double groupoid \eqref{eq:double} or its Artin-Mazur codiagonal $\bar{\Gamma}_\bullet$ can be viewed as a replacement of the integration.

  In \cite{JSZ}, it was proved that every Rota-Baxter operator on Lie algebras $B:\h\lon\g$ can be integrated into a local relative Rota-Baxter operator on Lie groups $\huaB: U(\subseteq H)\lon G$. Thus when $(\g, \h, \phi, B)$ is not integrable, the local relative Rota-Baxter operator $\huaB$ may also be viewed as a replacement of the integration.

  However we need to notice that the local Lie group $U$ coming from $\huaB$ is not $\bar{\Gamma}_\bullet$. In fact $\dim \bar{\Gamma}_1 = 2 \dim U$. These two sorts of replacements are different but related.
\end{rmk}

\subsection{Integrability via factorization of a diffeomorphism}

In this subsection, we give another necessary and sufficient condition of  Rota-Baxter Lie algebras being global integrable.

Let $(\g, B)$ be a Rota-Baxter Lie algebra. By Proposition \ref{graphro}, it shows that $(Gr(B), [\cdot,\cdot]_{\phi})$ is a Lie algebra. Define linear maps $f_{-}:Gr(B)\lon\g$ and $f_{+}: Gr(B)\lon\g$ by
$$
f_{-}(B(x), x)=B(x), \quad f_{+}(B(x), x)=B(x)+x, \quad \forall (B(x), x)\in Gr(B).
$$
Since
\begin{eqnarray*}
f_{-}\Big([(B(x), x), (B(y), y)]_{\phi}\Big)&=&f_{-}([B(x), B(y)]_\g, [x, B(y)]_\g+[B(x), y]_\g+[x, y]_\g)\\
&=&[B(x), B(y)]_\g=[f_{-}(B(x), x), f_{-}(B(y), y)]_\g,
\end{eqnarray*}
and
\begin{eqnarray*}
f_{+}\Big([(B(x), x), (B(y), y)]_{\phi}\Big)&=&B(x), B(y)]_\g+[x, B(y)]_\g+[B(x), y]_\g+[x, y]_\g\\
&=&[f_{+}(B(x), x), f_{+}(B(y), y)]_\g,
\end{eqnarray*}
it follows that $f_{-}$ and $f_{+}$ are Lie algebra homomorphisms.

Denote by $D$ and $G$ connected and simply connected Lie groups integrating $Gr(B)$ and $\g$ respectively. Let $F_{-}: D\lon G$ and $F_{+}: D\lon G$ be Lie groups homomorphisms integrating $f_{-}$ and $f_{+}$ respectively. Define a smooth map $J: D\lon G$ by
$$
J(a)=F_{+}(a)\cdot_G F_{-}(a)^{-1}, \quad \forall a\in D.
$$
\begin{thm}\label{thmrbint}
With the above notations, the Rota-Baxter operator $B:\g\lon\g$ is integrable if and only if the map $J: D\lon G$ is invertible.
\end{thm}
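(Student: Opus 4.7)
The plan is to apply Theorem~\ref{proint2} in the Rota-Baxter case ($\h=\g$, $H=G$, $\phi=\ad$): the integrability of $B$ is equivalent to $(G\ltimes_{\Ad}G,\tilde D,G\times e_G)$ being a double Lie group, where $\tilde D$ denotes the connected Lie subgroup of $G\ltimes_{\Ad}G$ integrating $Gr(B)$. The key is to realize $\tilde D$ concretely via $F_-$ and $J$, so that the three double Lie group axioms can be repackaged into the single condition that $J:D\to G$ is a diffeomorphism.

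First, I would define $\iota:D\lon G\ltimes_{\Ad}G$ by $\iota(a)=(F_-(a),J(a))$. Using that $F_\pm$ are Lie group homomorphisms and that $J=F_+\cdot_G F_-^{-1}$, a direct manipulation gives
$$
\iota(a_1)\cdot_{\Ad}\iota(a_2)=\bigl(F_-(a_1)F_-(a_2),\,J(a_1)\cdot_G\Ad(F_-(a_1))J(a_2)\bigr)=\iota(a_1a_2),
$$
so $\iota$ is a Lie group homomorphism. At the Lie algebra level, $J_\ast(B(x),x)=(B(x)+x)-B(x)=x$, hence $\iota_\ast(B(x),x)=(B(x),x)$, which is precisely the inclusion $Gr(B)\hookrightarrow\g\ltimes_\ad\g$. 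Consequently $\iota(D)=\tilde D$, giving the desired explicit parametrization.

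Next, I would introduce the factorization map $\Psi:D\times G\lon G\ltimes_{\Ad}G$, $\Psi(a,g)=\iota(a)\cdot_{\Ad}(g,e_G)=(F_-(a)\cdot_G g,\,J(a))$. Invertibility of $J$ is equivalent to $\Psi$ being a diffeomorphism: if $J$ is a diffeomorphism then $\Psi^{-1}(h_1,h_2)=(J^{-1}(h_2),\,F_-(J^{-1}(h_2))^{-1}\cdot_G h_1)$ is a smooth inverse; conversely $J^{-1}(k)=\pi_D(\Psi^{-1}(e_G,k))$ recovers a smooth inverse for $J$ from one for $\Psi$. I would then match $\Psi$ being a diffeomorphism with the double Lie group condition: if $\Psi$ is a diffeomorphism then $\tilde D=\Psi(D\times e_G)$ is a closed embedded Lie subgroup, and the axioms $\tilde D\cap(G\times e_G)=\{(e_G,e_G)\}$ and $\tilde D\cdot_{\Ad}(G\times e_G)=G\ltimes_{\Ad}G$ follow from the bijectivity of $\Psi$; conversely, when $B$ integrates to $\huaB:G\to G$, one has $\tilde D=Gr(\huaB)$ which is diffeomorphic to $G$ via the second projection, hence simply connected, so $\iota$ is an isomorphism $D\to\tilde D$, and the double Lie group multiplication $m:\tilde D\times(G\times e_G)\to G\ltimes_{\Ad}G$ is a smooth bijection between equidimensional manifolds, hence a diffeomorphism, whence $\Psi=m\circ(\iota\times\Id)$ is too.

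The main obstacle will be the closedness (embeddedness, not merely immersed) of $\tilde D$ when only a set-theoretic inverse of $J$ is assumed. This is precisely what the factorization map $\Psi$ circumvents: the diffeomorphism $\Psi$ provides a global smooth chart in which $\tilde D=\Psi(D\times\{e_G\})$ is visibly a closed submanifold, and the multiplication with $G\times e_G$ is visibly a diffeomorphism. Everything else reduces to bookkeeping with the inverse formula above.
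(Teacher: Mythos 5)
Your proposal is essentially correct but follows a genuinely different route from the paper. The paper does not pass through Theorem~\ref{proint2} at all: given an invertible $J$ it defines $\huaB=F_-\circ J^{-1}$ directly and verifies the Rota--Baxter identity by the computation $\huaB(g)\cdot_G\huaB(h)=F_-\bigl(J^{-1}(g)J^{-1}(h)\bigr)=\huaB\bigl(J(J^{-1}(g)J^{-1}(h))\bigr)=\huaB\bigl(g\cdot_G\huaB(g)\cdot_G h\cdot_G\huaB(g)^{-1}\bigr)$, together with the derivative check $J_*(B(x),x)=x$; conversely it identifies $D$ with $Gr(\huaB)$ and computes $J(\huaB(g),g)=g$ explicitly. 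Your route instead realizes the abstract group $D$ inside $G\ltimes_{\Ad}G$ via $\iota(a)=(F_-(a),J(a))$ (your verification that $\iota$ is a homomorphism and that $\iota_{*e}$ is the inclusion of $Gr(B)$ is correct) and reduces everything to the double Lie group criterion of Theorem~\ref{proint2} through the factorization map $\Psi$. What your approach buys is a conceptual unification with the first integrability criterion and an automatic proof that $\tilde D$ is closed and embedded; what the paper's approach buys is brevity and an explicit formula for $\huaB$ without invoking the earlier theorem.

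Two points need repair. First, in the converse direction you assert that $m:\tilde D\times(G\times e_G)\to G\ltimes_{\Ad}G$ is a diffeomorphism because it is ``a smooth bijection between equidimensional manifolds''; this implication is false in general (consider $x\mapsto x^3$). The correct justification is that $m$ is \'etale: its differential at the identity is the addition map $Gr(B)\oplus(\g\times 0)\to\g\ltimes_\ad\g$, which is an isomorphism since $(y,v)=(B(v),v)+(y-B(v),0)$, and one translates to all other points. Second, your passage from a set-theoretic inverse of $J$ to $\Psi$ being a diffeomorphism is circular as written, since your formula for $\Psi^{-1}$ uses smoothness of $J^{-1}$. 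The missing observation (also left implicit in the paper) is that $J$ is everywhere a local diffeomorphism: from $J(ab)=F_+(a)J(b)F_-(a)^{-1}$ and the invertibility of $J_{*e_D}$ one gets invertibility of $J_{*a}$ for all $a$, so a bijective $J$ is automatically a diffeomorphism. With these two repairs your argument goes through.
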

\begin{proof}
If $J: D\lon G$ is invertible, define a smooth map $\huaB: G\lon G$ by
$$
\huaB(g)=F_{-}(J^{-1}(g)), \quad \forall g\in G.
$$
Since
\begin{eqnarray*}
\huaB(g)\cdot_G\huaB(h)&=&F_{-}(J^{-1}(g))\cdot_G F_{-}(J^{-1}(h))\\
&=&F_{-}J^{-1}\Big(J((J^{-1}(g)\cdot_G J^{-1}(h)))\Big)\\
&=&\huaB(F_{+}(J^{-1}(g))\cdot_G F_{+}(J^{-1}(h))\cdot_G F_{-}(J^{-1}(h))^{-1}\cdot_G F_{-}(J^{-1}(g))^{-1})\\
&=&\huaB(g\cdot_G F_{-}(J^{-1}(g))\cdot_G h\cdot_G F_{-}(J^{-1}(g))^{-1})\\
&=&\huaB(g\cdot_G\huaB(g)\cdot_G h\cdot_G\huaB(g)^{-1}),
\end{eqnarray*}
it follows that $\huaB: G\lon G$ is a Rota-Baxter operator. Moreover, the tangent map of $J$ at $e_D$ is
$$
J_{*}(B(x), x)=f_{+}(B(x), x)-f_{-}(B(x), x)=x, \quad \forall (B(x), x)\in Gr(B),
$$
we have that
$$
\huaB_{*}(x)=f_{-}(J^{-1}_{*}(x))=f_{-}(B(x), x)=B(x), \quad \forall x\in\g.
$$
Thus $B:\g\lon\g$ is integrable.

Conversely, if $B:\g\lon\g$ is integrable, denote by $\huaB: G\lon G$ the integration of $B$. By Proposition \ref{graphgroup}, $Gr(\huaB)$ is a connected and simply connected Lie group integrating $Gr(B)$, i.e. $D=Gr(\huaB)$. Define $F_{\pm}: D\lon G$ by
\begin{eqnarray*}
F_{-}(\huaB(g), g)&=&\huaB(g), \\
F_{+}(\huaB(g), g)&=&g\cdot_G\huaB(g), \quad \forall (\huaB(g), g)\in Gr(\huaB).
\end{eqnarray*}
We have that $F_{\pm}$ are Lie group homomorphisms and $(F_{\pm})_{*e_D}=f_{\pm}$.
Moreover, the smooth map $J:Gr(\huaB)\lon G$ is given by
$$
J(\huaB(g), g)=F_{+}(\huaB(g), g)\cdot_G F_{-}(\huaB(g), g)^{-1}=g,
$$
thus the smooth map $J: Gr(\huaB)\lon G$ is invertible.
\end{proof}

\begin{rmk}
If $B:\h\lon\g$ is an invertible relative Rota-Baxter operator on Lie algebras, then the inverse of $B$ is a crossed homomorphism on Lie algebras. In \cite{JS, JLS}, it was shown that every crossed homomorphism on Lie algebras is integrable. But it is not clear that whether every invertible relative Rota-Baxter operator on Lie algebras is integrable.
\end{rmk}
\emptycomment{
\subsection{Integration of relative Rota-Baxter Lie algebras and integration of Rota-Baxter Lie algebras}
In this subsection, we will show that integration of relative Rota-Baxter Lie algebras are equivalent with integration of special Rota-Baxter Lie algebras.

Let $(\g, \h, \phi, B)$ be a relative Rota-Baxter Lie algebra. By Proposition \ref{iso}, it shows that $$\kappa: \g\oplus Gr(B)\lon g\ltimes_\phi\h, \quad \kappa(x, (B(u), u))=(x+B(u), u), \quad \forall (x, (B(u), u))\in\g\oplus Gr(B),$$ is a Lie algebra isomorphism. Define a linear map $p_B: \g\oplus Gr(B)\lon\g\oplus Gr(B)$ by
$$
p_B(x, (B(u), u))=(0, -(B(u), u)), \quad \forall (x, (B(u), u))\in\g\oplus Gr(B),
$$
then $p_B$ is a Rota-Baxter operator on $(\g\oplus Gr(B), [\cdot,\cdot]_{\bowtie})$. Thus we have that
$$
\mathrm{B}:\g\ltimes_\phi\h\lon\g\ltimes_\phi\h, \quad \mathrm{B}(x, u)=\kappa(p_B(\kappa^{-1}(x, u)))=(-B(u), -u), \quad \forall (x, u)\in\g\ltimes_\phi\h,
$$
is a Rota-Baxter operator on $(\g\ltimes_\phi\h, [\cdot, \cdot]_\phi)$.

Let $G$ and $H$ connected and simply connected Lie groups integrating $\g$ and $\h$ respectively. Denote by $\Phi: G\lon\Aut(H)$ the integration of $\phi:\g\lon\Der(\h)$. Then we have the following result.

\begin{thm}
With the above notations, the relative Rota-Baxter Lie algebra $(\g, \h, \phi, B)$ is integrable if and only if $(\g\oplus Gr(B), p_B)$ is integrable.
\end{thm}
\begin{proof}
Assume that $B:\h\lon\g$ is integrable,

Conversely, assume that $p_B:\g\ltimes_\phi\h\lon\g\ltimes_\phi\h$ is integrable, it follows that $\mathrm{B}:\g\ltimes_\phi\h\lon\g\ltimes_\phi\h$ is integrable.
\end{proof}
}

\section{Integration and factorizable Poisson-Lie groups}
In this section, we apply the integration of Rota-Baxter operators to characterize when Poisson-Lie groups integrating factorizable Lie bialgebras are factorizable Poisson-Lie groups.

We first recall  factorizable Lie bialgebras and factorizable Poisson-Lie groups. See \cite{Dri, KOS, LuW, RS} for more details.
\begin{defi}
Let $(\g, [\cdot, \cdot]_\g)$ be a Lie algebra and $\gamma:\g\lon\g\otimes\g$ be a linear map. The pair $(\g, \gamma)$ is called a Lie bialgebra if
\begin{itemize}
\item [\rm(i)] $\gamma$ is a $1$-cocycle with respect to the representation $(\g\otimes\g, \ad\otimes\Id+\Id\otimes\ad)$, i.e.
\begin{equation*}
\gamma([x, y]_\g)=[\gamma(x), y]_\g+[x, \gamma(y)]_\g, \quad \forall x, y\in\g.
\end{equation*}
\item [\rm(ii)] $(\g^*, [\cdot, \cdot]_{\g^*})$ is a Lie algebra, where
$$
\langle[\xi, \eta]_{\g^*}, x\rangle=\gamma(x)(\xi, \eta), \quad \forall \xi, \eta\in\g^*, ~~x\in\g.
$$
\end{itemize}
\end{defi}
Let $(\g, [\cdot, \cdot]_\g)$ be a Lie algebra. Denote by $$r=a+s=\sum_i x_i\otimes y_i\in\g\otimes\g, \quad \text{where}~~~~ a\in\wedge^2\g,~~ s\in S^2\g.$$ Define the linear map $\gamma:\g\lon\g\otimes\g$ by $\gamma=\dM r$, where $\dM$ is the Chevalley-Eilenberg coboundary operator of the Lie algebra $\g$ with coefficients in the representation $(\g\otimes\g, \ad\otimes\Id+\Id\otimes\ad)$. 
Then $(\g, \dM r)$ is a Lie bialgebra if and only if $\ad_{x}s=0$ and $\ad_{x}\Courant{r, r}=0$ for all $x\in\g$, where
\begin{eqnarray*}
\Courant{r, r}&=&[r_{12}, r_{13}]+[r_{12}, r_{23}]+[r_{13}, r_{23}]\\
&=&\sum_{i,j}[x_i,x_j]_\g\otimes y_i\otimes y_j+\sum_{i,j}x_i\otimes [y_i, x_j]_\g\otimes y_j+\sum_{i,j}x_i\otimes x_j\otimes [y_i, y_j]_\g.
\end{eqnarray*}
In particular, if $\Courant{r, r}=0$, the pair $(\g, \dM r)$ is called a quasi-triangular Lie bialgebra.

Let $(\g, \dM r)$ be a quasi-triangular Lie bialgebra. Define a linear map $I:\g^*\lon\g$ by $I=r_{+}-r_{-}$, where
$$
\langle r_{+}(\xi), \eta\rangle=r(\xi, \eta), \quad \langle r_{-}(\xi), \eta\rangle=-r(\eta, \xi), \quad \forall \xi, \eta\in\g^*.
$$
Then in this setting, the Lie bracket on $\g^*$ is given by
$$
[\xi, \eta]_{\g^*}=\ad^{*}_{r_{+}(\xi)}\eta-\ad^{*}_{r_{-}(\eta)}\xi, \quad\forall \xi, \eta\in\g^*.
$$
Moreover, $r_{+}:\g^*\lon\g$ and $r_{-}:\g^*\lon\g$ are Lie algebra homomorphisms.

\begin{defi}
The quasi-triangular Lie bialgebra $(\g, \dM r)$ is called a factorizable Lie bialgebra if the linear map $I:\g^*\lon\g$ is an isomorphism.
\end{defi}
A factorizable Lie bialgebra can give rise a Rota-Baxter Lie algebra.
\begin{thm}\cite{LS}
Let $(\g, \dM r)$ be a factorizable Lie bialgebra with $I=r_+-r_{-}$. Then $(\g, r_{-}\circ I^{-1})$ is a Rota-Baxter Lie algebra.
\end{thm}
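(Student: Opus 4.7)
The plan is to verify the Rota-Baxter identity
\[
[B(x), B(y)]_\g = B\bigl([B(x), y]_\g + [x, B(y)]_\g + [x, y]_\g\bigr), \qquad B := r_- \circ I^{-1},
\]
by translating everything back to $\g^*$ via the isomorphism $I = r_+ - r_-$ and exploiting the fact that both $r_+$ and $r_-$ are Lie algebra homomorphisms $(\g^*, [\cdot,\cdot]_{\g^*}) \to (\g, [\cdot,\cdot]_\g)$, a fact already recorded in the excerpt just before the definition of factorizable Lie bialgebra.

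First I would introduce the auxiliary operator $B + \Id = r_+ \circ I^{-1}$, which follows immediately from $I = r_+ - r_-$. Next, for arbitrary $x, y \in \g$, set $\xi = I^{-1}(x)$ and $\eta = I^{-1}(y)$, so that $B(x) = r_-(\xi)$, $(B+\Id)(x) = r_+(\xi)$, and similarly for $y$. Applying the homomorphism property of $r_-$ and $r_+$ separately yields
\[
[B(x), B(y)]_\g = r_-([\xi,\eta]_{\g^*}), \qquad
[B(x)+x,\, B(y)+y]_\g = r_+([\xi,\eta]_{\g^*}).
\]
Subtracting these two identities produces $(r_+ - r_-)([\xi,\eta]_{\g^*}) = I([\xi,\eta]_{\g^*})$ on the right, while on the left the cross terms expand into exactly $[B(x), y]_\g + [x, B(y)]_\g + [x, y]_\g$.

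Consequently $[B(x), y]_\g + [x, B(y)]_\g + [x, y]_\g = I([\xi,\eta]_{\g^*})$. Applying $B = r_- \circ I^{-1}$ to both sides collapses the right-hand side to $r_-([\xi,\eta]_{\g^*})$, which equals $[B(x), B(y)]_\g$ by the first identity above. This is the desired Rota-Baxter relation of weight~$1$.

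There is essentially no obstacle here beyond keeping the two homomorphisms $r_\pm$ straight; the computation is purely formal once the isomorphism $I$ is used to move between $\g$ and $\g^*$. The only subtlety worth mentioning in the written proof is that the homomorphism property of $r_\pm$ is a consequence of $\Courant{r,r} = 0$ together with the $\g$-invariance of the symmetric part $s$ (so that both $r_\pm$ see the same Lie bracket on $\g^*$), and the invertibility of $I$ is precisely the factorizability hypothesis, without which $B$ would not even be well defined.
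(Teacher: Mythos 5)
Your proof is correct and complete: the identity $r_+\circ I^{-1}=B+\Id$, the homomorphism property of $r_\pm$, and the subtraction of the two resulting bracket identities give exactly the weight-$1$ Rota--Baxter relation. The paper states this theorem as a citation of \cite{LS} without reproducing a proof, and your argument is precisely the standard one used there, so there is nothing further to reconcile.
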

\begin{defi}
Let $(G, \cdot_G)$ be a Lie group and $(G, \pi)$ is a Poisson manifold. The triple $(G, \cdot_G, \pi)$ is called a Poisson-Lie group if $\pi$ satisfies
$$
\pi_{g\cdot_G h}=L_g\pi_h+R_h\pi_g, \quad\forall g, h\in G.
$$
\end{defi}
It is well known that Lie bialgebras are one-to-one correspondence with connected and simply connected Poisson-Lie groups.

Let $(\g, \dM r)$ be a factorizable Lie algebra. Denote by $(G, \cdot_G, \pi_r)$ the connected and simply connected Poisson-Lie group integrating $(\g, [\cdot, \cdot]_\g)$, where the Poisson structure $\pi_r$ is given by
$$
\pi_r=r^{L}-r^{R}=a^{L}-a^{R},
$$
where $r^{L}$ and $r^{R}$ are vector fields obtained by left translation   and right translation of $r$ respectively.

Denote by $(G^*, \cdot_{G^*})$ the connected and simply connected Lie group integrating $(\g^*, [\cdot, \cdot]_{\g^*})$. Let $R_{+}: G^*\lon G$ and $R_{-}: G^*\lon G$ be the lifted Lie group homomorphisms of the Lie algebra homomorphisms $r_+:\g^*\lon\g$ and $r_-:\g^*\lon\g$.

Define a smooth map $J: G^*\lon G$ by
$$
J(q)=R_+(q)\cdot_G R_{-}(q)^{-1}, \quad \forall q\in G^*.
$$
The Poisson-Lie group $(G, \cdot_G, \pi_r)$ is called a {\bf factorizable Poisson-Lie group} if $J$ is a diffeomorphism. It is known that  not every factorizable Lie bialgebra can be integrated into a factorizable Poisson-Lie group. In the sequel,  we give a necessary and sufficient condition on  when a factorizable Lie bialgebra can be integrated into a factorizable Poisson-Lie group using the integrability of Rota-Baxter Lie algebras.

\begin{thm}
With the above notations, the Poisson-Lie group $(G, \cdot_G, \pi_r)$ integrating $(\g, \dM r)$ is a factorizable Poisson-Lie group if and only if the Rota-Baxter operator $r_{-}\circ I^{-1}:\g\lon\g$ is integrable.
\end{thm}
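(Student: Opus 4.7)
The strategy is to reduce the statement to Theorem \ref{thmrbint} applied to the Rota-Baxter Lie algebra $(\g, B)$ with $B = r_- \circ I^{-1}$. For this, I need to identify the connected and simply connected Lie group $D$ integrating $Gr(B)$ with the Poisson dual $G^*$, and then identify the abstract maps $F_\pm : D \to G$ from Theorem \ref{thmrbint} with the factorization maps $R_\pm : G^* \to G$ on the nose.

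First I would introduce the linear map $\iota : \g^* \to Gr(B)$ by $\iota(\xi) = (r_-(\xi), I(\xi))$. Since $B \circ I = r_-$, we have $\iota(\xi) = (B(I(\xi)), I(\xi)) \in Gr(B)$, and $\iota$ is a linear isomorphism because $I$ is. The central computation is to verify that $\iota$ is a Lie algebra isomorphism from $(\g^*, [\cdot,\cdot]_{\g^*})$ to $(Gr(B), [\cdot,\cdot]_\phi)$. Using the formula $[\xi,\eta]_{\g^*} = \ad^*_{r_+(\xi)}\eta - \ad^*_{r_-(\eta)}\xi$, the fact that $r_\pm:\g^*\to\g$ are Lie algebra homomorphisms, and $I = r_+ - r_-$, a direct expansion of the bracket $[(r_-(\xi), I(\xi)),(r_-(\eta), I(\eta))]_\phi$ in the semidirect product $\g\ltimes_\ad \g$ collapses (after cancellation of the mixed terms $[r_-(\xi), r_+(\eta)]_\g$ and $[r_+(\xi), r_-(\eta)]_\g$) to $(r_-([\xi,\eta]_{\g^*}), I([\xi,\eta]_{\g^*}))$. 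This verification is the heart of the argument and essentially the only nontrivial step.

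Next, by Lie's second theorem, $\iota$ lifts to a unique Lie group isomorphism $\mathcal{I} : G^* \to D$. Composing with $F_\pm$ of Theorem \ref{thmrbint}, the derivatives at the identity are $f_-\circ \iota = r_-$ and $f_+\circ\iota = r_- + I = r_+$, so by uniqueness we obtain $F_-\circ \mathcal{I} = R_-$ and $F_+\circ\mathcal{I} = R_+$. Consequently the map $J_D : D \to G$, $a \mapsto F_+(a)\cdot_G F_-(a)^{-1}$, of Theorem \ref{thmrbint} coincides under $\mathcal{I}$ with the factorization map $J : G^* \to G$, $q \mapsto R_+(q)\cdot_G R_-(q)^{-1}$. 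Thus $J_D$ is a diffeomorphism if and only if $J$ is. Applying Theorem \ref{thmrbint} yields the desired equivalence: $B = r_-\circ I^{-1}$ is integrable iff $J_D$ is invertible iff $J$ is invertible iff $(G,\cdot_G,\pi_r)$ is a factorizable Poisson-Lie group. The only potential obstacle is the bracket identity in the second paragraph; everything else follows from Lie II and the definitions.
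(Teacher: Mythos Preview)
Your proposal is correct and follows essentially the same approach as the paper: the map $\iota$ you define is exactly the paper's $\hat{I}$, the key Lie algebra isomorphism computation is the same (the paper writes it as $\hat{I}([\xi,\eta]_{\g^*}) = ([r_-(\xi),r_-(\eta)]_\g,\ [r_+(\xi),r_+(\eta)]_\g - [r_-(\xi),r_-(\eta)]_\g) = [\hat{I}(\xi),\hat{I}(\eta)]_\ad$), and the reduction to Theorem~\ref{thmrbint} via the integrated isomorphism $\mathcal{I}:G^*\to D$ and the identification $F_\pm\circ\mathcal{I}=R_\pm$ matches the paper's argument verbatim.
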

\begin{proof}
Since $r_{-}\circ I^{-1}:\g\lon\g$ is a Rota-Baxter operator, thus $(Gr(r_{-}\circ I^{-1}), [\cdot, \cdot]_{\ad})$ is a Lie algebra. Define a linear map $\hat{I}:\g^{*}\lon Gr(r_{-}\circ I^{-1})$ by
$$
\hat{I}(\xi)=(r_{-}(\xi), I(\xi)), \quad \forall \xi\in\g^*.
$$
We have
\begin{eqnarray*}
\hat{I}([\xi, \eta]_{\g^*})&=&([r_{-}(\xi), r_{-}(\eta)]_\g, [r_{+}(\xi), r_{+}(\eta)]_\g-[r_{-}(\xi), r_{-}(\eta)]_\g)\\
&=&([r_{-}(\xi), r_{-}(\eta)]_\g, [I(\xi), r_{-}(\eta)]_\g+[r_{-}(\xi), I(\eta)]_\g+[I(\xi), I(\eta)]_\g)\\
&=&[\hat{I}(\xi), \hat{I}(\eta)]_\ad.
\end{eqnarray*}
Thus $\hat{I}:\g^{*}\lon Gr(r_{-}\circ I^{-1})$ is a Lie algebra isomorphism. Moreover, we have two Lie algebra homomorphisms as following,
$$
r_{+}\circ\hat{I}^{-1}: Gr(r_{-}\circ I^{-1})\lon\g, \quad r_{+}\circ\hat{I}^{-1}(r_{-}(I^{-1}(x)), x)=r_{-1}(I^{-1}(x))+x,
$$
and
$$
r_{-}\circ\hat{I}^{-1}: Gr(r_{-}\circ I^{-1})\lon\g, \quad r_{-}\circ\hat{I}^{-1}(r_{-}(I^{-1}(x)), x)=r_{-}(I^{-1}(x)).
$$
Denote by $E, G$ and $G^*$ connected and simply connected Lie groups integrating $Gr(r_{-}\circ I^{-1}), \g$ and $\g^*$ respectively. Denote by $R_{-}, R_{+}$ and $\hat{\huaI}$ Lie group homomorphisms integrating $r_{-}, r_{+}$ and $\hat{I}$ respectively. We have that $G^*=\hat{\huaI}^{-1}(E)$. Define a smooth map $J: E\lon G$ by
$$
J(a)=R_{+}(\hat{\huaI}^{-1}(a))\cdot_G R_{-}(\hat{\huaI}^{-1}(a))^{-1}, \quad \forall a\in E.
$$
By Theorem \ref{thmrbint}, it follows that $r_{-}\circ I^{-1}$ is integrable if and only if $J$ is a diffeomorphism. Thus the Poisson-Lie group $(G, \cdot_G, \pi_r)$ integrating $(\g, \dM r)$ is a factorizable Poisson-Lie group if and only if the Rota-Baxter operator $r_{-}\circ I^{-1}:\g\lon\g$ is integrable.
\end{proof}

\section{Examples}
In this section, we give some examples. In particular, we study the integrability of Rota-Baxter operators on the nontrivial $2$-dimensional Lie algebra. It turns out that not all Rota-Baxter operators are integrable. As a byproduct, we construct a matched pair of Lie algebras that can not be integrated to a matched pair of Lie groups.
\begin{ex}{\rm
Consider the Lie algebra $\g=\so(3, \mathbb{R})$, whose integration is the connected and simply connected Lie group $\mathrm{SU}(2)$. Since $\mathrm{SU}(2)$ is compact, by Corollary \ref{Cor1}, then any Rota-Baxter operator $B:\so(3, \mathbb{R})\lon\so(3, \mathbb{R})$ can be integrated into a Rota-Baxter operator $\huaB: \mathrm{SU}(2)\lon \mathrm{SU}(2)$.
}
\end{ex}

\begin{ex}{\rm
Consider the Heisenberg Lie algebra $(\g, [\cdot,\cdot])$ with the basis
$$
X=
\left(
    \begin{array}{ccc}
      0 & 1 & 0 \\
      0 & 0 & 0 \\
      0 & 0 & 0 \\
    \end{array}
  \right), \quad
Y=
\left(
  \begin{array}{ccc}
    0 & 0 & 0 \\
    0 & 0 & 1 \\
    0 & 0 & 0 \\
  \end{array}
\right), \quad
Z=
\left(
  \begin{array}{ccc}
    0 & 0 & 1 \\
    0 & 0 & 0 \\
    0 & 0 & 0 \\
  \end{array}
\right),
$$
and
$$
[X, Y]=Z, \quad [X, Z]=0, \quad [Y, Z]=0.
$$
Then $G=\{\left(
            \begin{array}{ccc}
              1 & a & b \\
              0 & 1 & c \\
              0 & 0 & 1 \\
            \end{array}
          \right)
| a, b, c\in\mathbb{R}
\}$ is the connected and simply connected Lie group integrating the Heisenberg Lie algebra $\g$.
Define a linear map $B:\g\lon\g$ by
$$
B(X, Y, Z)=(X, Y, Z)\left(
                      \begin{array}{ccc}
                        0 & 0 & 0 \\
                        0 & 0 & 0 \\
                        0 & 1 & 0 \\
                      \end{array}
                    \right).
$$
By straightforward computation, $B$ is a Rota-Baxter operator on $(\g, [\cdot, \cdot])$, and the Lie subalgebra $Gr(B)\subseteq\g\ltimes_\ad\g$ is given by
\begin{eqnarray*}
&&Gr(B)\\
&=&\text{span}\{\Big(\left(
                          \begin{array}{ccc}
                            0 & 0 & 0 \\
                            0 & 0 & 0 \\
                            0 & 0 & 0 \\
                          \end{array}
                        \right),
                        \left(
                          \begin{array}{ccc}
                            0 & 1 & 0 \\
                            0 & 0 & 0 \\
                            0 & 0 & 0 \\
                          \end{array}
                        \right)
\Big),\Big(\left(
                          \begin{array}{ccc}
                            0 & 0 & 1 \\
                            0 & 0 & 0 \\
                            0 & 0 & 0 \\
                          \end{array}
                        \right),
                        \left(
                          \begin{array}{ccc}
                            0 & 0 & 0 \\
                            0 & 0 & 1 \\
                            0 & 0 & 0 \\
                          \end{array}
                        \right)
\Big),\Big(\left(
                          \begin{array}{ccc}
                            0 & 0 & 0 \\
                            0 & 0 & 0 \\
                            0 & 0 & 0 \\
                          \end{array}
                        \right),
                        \left(
                          \begin{array}{ccc}
                            0 & 0 & 1 \\
                            0 & 0 & 0 \\
                            0 & 0 & 0 \\
                          \end{array}
                        \right)
\Big)\}.
\end{eqnarray*}
Denote by
$$
D=\{\left(
    \begin{array}{ccc}
      1 & 0 & c \\
      0 & 1 & 0 \\
      0 & 0 & 1 \\
    \end{array}
  \right), \left(
             \begin{array}{ccc}
               1 & a & b \\
               0 & 1 & c \\
               0 & 0 & 1 \\
             \end{array}
           \right)|a, b, c\in\mathbb{R}\}.
$$
Then $D\subseteq G\ltimes_\Ad G$ is the closed connected Lie subgroup and $T_{(e_G, e_G)}D=Gr(B)$. Moreover, by straightforward computation, it follows that $D\cdot_\Ad (G\times e_G)=G\ltimes_\Ad G$. By Theorem \ref{proint2}, the Rota-Baxter $B:\g\lon\g$ can be integrated into a Rota-Baxter operator $\huaB: G\lon G$. By   \eqref{eqrbnew}, the Rota-Baxter operator $\huaB$ is given by
\begin{equation*}
\huaB(\left(
        \begin{array}{ccc}
          1 & a & b \\
          0 & 1 & c \\
          0 & 0 & 1 \\
        \end{array}
      \right)
)=\left(
          \begin{array}{ccc}
            1 & 0 & c \\
            0 & 1 & 0 \\
            0 & 0 & 1 \\
          \end{array}
        \right).
\end{equation*}
}
\end{ex}

In fact, any Rota-Baxter operator on the Heisenberg Lie algebra is integrable. See \cite{Rat} for more details.

\begin{ex}\label{exint}{\rm
Consider the $2$-dimensional real Lie algebra
$$\Big(\g=\text{span}\{e_1=\left(
\begin{array}{cc}
1 & 0 \\
0 & -1 \\
\end{array}
\right), e_2=\left(
\begin{array}{cc}
0 & 1 \\
0 & 0 \\
\end{array}
\right)
\}, ~~~~[\cdot, \cdot]\Big),$$ where $$[e_1, e_2]=e_1e_2-e_2e_1=2e_2.$$
Then $G=\{\left(                                                                                                                             \begin{array}{cc}                                                                                                                                 a & b \\                                                                                                                                 0 & \frac{1}{a} \\                                                                                                                               \end{array}                                                                                                                             \right)| a>0, b\in\mathbb{R}
\}$ is the connected and simply connected Lie group integrating the Lie algebra $\g$.
A linear map $B:\g\lon\g$ given by
$$
B(e_1, e_2)=(e_1, e_2)
\left(
\begin{array}{cc}
b_{11} & b_{12} \\
b_{21} & b_{22} \\
\end{array}
\right)
$$
is a Rota-Baxter operator if and only if
$$
[B(e_1), B(e_2)]=B([B(e_1), e_2]+[e_1, B(e_2)]+[e_1, e_2]).
$$
By straightforward computation, there are the following three classes of Rota-Baxter operators:
$$
B=
\left(
\begin{array}{cc}
\lambda & 0 \\
\mu & 0 \\
\end{array}
\right), \quad \text{or} \quad
B=
\left(
\begin{array}{cc}
\lambda & 0 \\
\mu & -1 \\
\end{array}
\right),
\quad \text{or} \quad B=
\left(
\begin{array}{cc}
-m-1 & k \\
\frac{-m^2-m}{k} & m \\
\end{array}
\right), \quad k\neq0.
$$
In the following, we will analyse the integrability of above Rota-Baxter operators on $\g$.

{\bf Case $1.1:$} the Rota-Baxter operator $B:\g\lon\g$ is given by
 $$
B(e_1, e_2)=(e_1, e_2)
\left(
\begin{array}{cc}
\lambda & 0 \\
\mu & 0 \\
\end{array}
\right), \quad \text{where}\quad\lambda\neq-1, \lambda\neq0,\mu\in\mathbb{R}.
 $$
The Lie algebra $Gr(B)\subseteq\g\ltimes_\ad\g$ associated to $B$ is
$$
Gr(B)=\text{span}\{\Big(\left(                                                                                                                                   \begin{array}{cc}                                                                                                                                     \lambda & \mu \\                                                                                                                                     0 & -\lambda \\                                                                                                                                   \end{array}                                                                                                                                 \right), \left(                                                                                                                                            \begin{array}{cc}                                                                                                                                              1 & 0 \\                                                                                                                                              0 & -1 \\                                                                                                                                            \end{array}                                                                                                                                          \right)\Big), \Big(\left(                                                                                                                                                               \begin{array}{cc}                                                                                                                                                                 0 & 0 \\                                                                                                                                                                 0 & 0 \\                                                                                                                                                               \end{array}                                                                                                                                                             \right),\left(                                                                                                                                                                       \begin{array}{cc}                                                                                                                                                                         0 & 1 \\                                                                                                                                                                         0 & 0 \\                                                                                                                                                                       \end{array}                                                                                                                                                                     \right)                                                                                                                                          \Big)
\}.$$
Denote by
$$
D=\{\left(
\begin{array}{cc}
a^{\lambda} & \frac{\mu}{2\lambda}(a^{\lambda}-a^{-\lambda}) \\
0 & a^{-\lambda} \\
\end{array}
\right),
\left(
\begin{array}{cc}
a & -a^{(\lambda+1)}\frac{\mu}{2\lambda}(a^{\lambda}-a^{-\lambda})+a^{\lambda}\frac{\mu}{2(\lambda+1)}(a^{(\lambda+1)}-a^{-(\lambda+1)})+sa^{-1} \\
0 & a^{-1} \\
\end{array}
\right)|a>0, s\in\mathbb{R}
\}.
$$
Then $D\subseteq G\ltimes_\Ad G$ is the closed connected Lie subgroup and $T_{(e_G, e_G)}D=Gr(B)$. Moreover,
\begin{eqnarray*}
D\cdot_\Ad (G\times e_G)&\subseteq&\emptycomment{\{
\left(
\begin{array}{cc}
a^{\lambda}p & a^{\lambda}q+\frac{\mu}{2\lambda}(a^{\lambda}-a^{-\lambda})p^{-1} \\
0 & a^{-\lambda}p^{-1} \\
\end{array}
\right),
\left(
\begin{array}{cc}
a & -a^{(\lambda+1)}\frac{\mu}{2\lambda}(a^{\lambda}-a^{-\lambda})+a^{\lambda }\frac{\mu}{2(\lambda+1)}(a^{(\lambda+1)}-a^{-(\lambda+1)})+sa^{-1} \\
0 & a^{-1} \\
\end{array}
\right)
\}.\\}
 G\ltimes_\Ad G.
\end{eqnarray*}
For any
$\Big(
\left(
  \begin{array}{cc}
    x & y \\
    0 & \frac{1}{x} \\
  \end{array}
\right),
\left(
  \begin{array}{cc}
    z & w \\
    0 & \frac{1}{z} \\
  \end{array}
\right)
\Big)\in G\ltimes_\Ad G$, let
\begin{eqnarray*}
\left\{\begin{array}{rcl}
p&=&\frac{x}{z^{\lambda}}, \\
q&=&\frac{y}{z^{\lambda}}-\frac{\mu}{2\lambda x}(z^{\lambda}-z^{-\lambda}),\\
a&=&z, \\
s&=&zw+z^{(\lambda+2)}\frac{\mu}{2\lambda}(z^{\lambda}-z^{-\lambda})-z^{\lambda+1 }\frac{\mu}{2(\lambda+1)}(z^{(\lambda+1)}-z^{-(\lambda+1)}).
\end{array}\right.
\end{eqnarray*}
Then
\begin{eqnarray*}
&&
\Big(
\left(
  \begin{array}{cc}
    x & y \\
    0 & \frac{1}{x} \\
  \end{array}
\right),
\left(
  \begin{array}{cc}
    z & w \\
    0 & \frac{1}{z} \\
  \end{array}
\right)
\Big)\\
&=&\Big(
\left(
\begin{array}{cc}a^{\lambda} & \frac{\mu}{2\lambda}(a^{\lambda}-a^{-\lambda}) \\
0 & a^{-\lambda} \\
\end{array}
\right),
\left(
\begin{array}{cc}
a & -a^{(\lambda+1)}\frac{\mu}{2\lambda}(a^{\lambda}-a^{-\lambda})+a^{\lambda}\frac{\mu}{2(\lambda+1)}(a^{(\lambda+1)}-a^{-(\lambda+1)})+sa^{-1} \\
0 & a^{-1} \\
\end{array}
\right)\Big)\\
&&\cdot_\Ad \Big(
\left(
\begin{array}{cc}p & q \\
0 & p^{-1} \\
\end{array}
\right),
\left(
\begin{array}{cc}
 1& 0 \\
0 & 1 \\
\end{array}
\right)
\Big)\in D\cdot_\Ad (G\times e_G).
\end{eqnarray*}
Thus $D\cdot_\Ad (G\times e_G)=G\ltimes_\Ad G$. By Theorem \ref{proint2}, the Rota-Baxter operator $B:\g\lon\g$ can be integrated into a Rota-Baxter operator $\huaB: G\lon G$.

In fact the smooth map $\huaB: G\lon G$ given by
$$
\huaB
\left(
\begin{array}{cc}
a & b \\
 0 & a^{-1} \\
\end{array}
\right)=
\left(
\begin{array}{cc}
a^{\lambda} & \frac{\mu}{2\lambda}(a^{\lambda}-a^{-\lambda}) \\
 0 & a^{-\lambda} \\
\end{array}
\right),
$$
is a Rota-Baxter operator on the Lie group $G$ integrating
$B=
\left(
\begin{array}{cc}
\lambda & 0 \\
\mu & 0 \\
\end{array}
\right)
$.

{\bf Case $1.2:$} the Rota-Baxter operator $B:\g\lon\g$ is given by
$$
B(e_1, e_2)=(e_1, e_2)
\left(
  \begin{array}{cc}
    -1 & 0 \\
    \mu & 0 \\
  \end{array}
\right),
\quad \text{where}\quad\mu\in\mathbb{R}.
$$
The Lie algebra $Gr(B)\subseteq\g\ltimes_\ad\g$ associated to $B$ is
$$
Gr(B)=\text{span}\{\Big(\left(                                                                                                                                   \begin{array}{cc}                                                                                                                                     -1& \mu \\                                                                                                                                     0 & 1 \\                                                                                                                                   \end{array}                                                                                                                                 \right), \left(                                                                                                                                            \begin{array}{cc}                                                                                                                                              1 & 0 \\                                                                                                                                              0 & -1 \\                                                                                                                                            \end{array}                                                                                                                                          \right)\Big), \Big(\left(                                                                                                                                                               \begin{array}{cc}                                                                                                                                                                 0 & 0 \\                                                                                                                                                                 0 & 0 \\                                                                                                                                                               \end{array}                                                                                                                                                             \right),\left(                                                                                                                                                                       \begin{array}{cc}                                                                                                                                                                         0 & 1 \\                                                                                                                                                                         0 & 0 \\                                                                                                                                                                       \end{array}                                                                                                                                                                     \right)                                                                                                                                          \Big)
\}.$$
Denote by
$$
D=\{\left(
\begin{array}{cc}
a & \frac{-\mu}{2}(a-a^{-1}) \\
0 & a^{-1} \\
\end{array}
\right),
\left(
\begin{array}{cc}
a^{-1} & \frac{\mu}{2}(a-a^{-1})-\mu a\ln a+sa \\
0 & a \\
\end{array}
\right)|a>0, s\in\mathbb{R}
\}.
$$
Then $D\subseteq G\ltimes_\Ad G$ is the closed connected Lie subgroup and $T_{(e_G, e_G)}D=Gr(B)$. Moreover, by similar computation in case $1.1$,
\emptycomment{
\begin{eqnarray*}
D\cdot_\Ad G=\{
\left(
\begin{array}{cc}
ap & aq-\frac{\mu}{2}(a-a^{-1})p^{-1} \\
0 & a^{-1}p^{-1} \\
\end{array}
\right),
\left(
\begin{array}{cc}
a^{-1} & \frac{\mu}{2}(a-a^{-1})-\mu alna+sa \\
0 & a \\
\end{array}
\right)|a, p>0, s, q\in\mathbb{R}
\}.
\end{eqnarray*}
For any
$\Big(
\left(
  \begin{array}{cc}
    x & y \\
    0 & \frac{1}{x} \\
  \end{array}
\right),
\left(
  \begin{array}{cc}
    z & w \\
    0 & \frac{1}{z} \\
  \end{array}
\right)
\Big)\in G\cdot_{\ltimes} H$, let
\begin{eqnarray*}
\left\{\begin{array}{rcl}
p&=&xz, \\
s&=&zw-\frac{\mu}{2}(1-z^2)-\mu lnz,\\
a&=&\frac{1}{z}, \\
q&=&yz+\frac{\mu}{2x}(\frac{1}{z}-z),
\end{array}\right.
\end{eqnarray*}
we have that
\begin{eqnarray*}
&&\Big(
\left(
\begin{array}{cc}
ap & aq-\frac{\mu}{2}(a-a^{-1})p^{-1} \\
0 & a^{-1}p^{-1} \\
\end{array}
\right),
\left(
\begin{array}{cc}
a^{-1} & \frac{\mu}{2}(a-a^{-1})-\mu alna+sa \\
0 & a \\
\end{array}
\right)\Big)\\
&=&\Big(
\left(
  \begin{array}{cc}
    x & y \\
    0 & \frac{1}{x} \\
  \end{array}
\right),
\left(
  \begin{array}{cc}
    z & w \\
    0 & \frac{1}{z} \\
  \end{array}
\right)
\Big)\in D\cdot_\Ad G.
\end{eqnarray*}
}
it follows that $D\cdot_\Ad (G\times e_G)=G\ltimes_\Ad G$. By Theorem \ref{proint2}, the Rota-Baxter operator $B:\g\lon\g$ can be integrated into a Rota-Baxter operator $\huaB: G\lon G$.

In fact the smooth map $\huaB:G\lon G$ defined by
$$
\huaB
\left(
\begin{array}{cc}
a & b \\
 0 & a^{-1} \\
\end{array}
\right)=
\left(
\begin{array}{cc}
a^{-1} & -\frac{\mu}{2}(a^{-1}-a) \\
 0 & a \\
\end{array}
\right),
$$
is a Rota-Baxter operator on the Lie group $G$
integrating
$B=
\left(
\begin{array}{cc}
-1 & 0 \\
\mu & 0 \\
\end{array}
\right)
$.

{\bf Case $1.3:$} the Rota-Baxter operator $B:\g\lon\g$ is given by
$$
B(e_1, e_2)=(e_1, e_2)
\left(
  \begin{array}{cc}
    0 & 0 \\
    \mu & 0 \\
  \end{array}
\right),
 \quad \text{where}\quad\mu\in\mathbb{R}.
$$
The Lie algebra $Gr(B)\subseteq\g\ltimes_\ad\g$ associated to $B$ is
$$
Gr(B)=\text{span}\{\Big(\left(                                                                                                                                   \begin{array}{cc}                                                                                                                                     0 & \mu \\                                                                                                                                     0 & 0\\                                                                                                                                   \end{array}                                                                                                                                 \right), \left(                                                                                                                                            \begin{array}{cc}                                                                                                                                              1 & 0 \\                                                                                                                                              0 & -1 \\                                                                                                                                            \end{array}                                                                                                                                          \right)\Big), \Big(\left(                                                                                                                                                               \begin{array}{cc}                                                                                                                                                                 0 & 0 \\                                                                                                                                                                 0 & 0 \\                                                                                                                                                               \end{array}                                                                                                                                                             \right),\left(                                                                                                                                                                       \begin{array}{cc}                                                                                                                                                                         0 & 1 \\                                                                                                                                                                         0 & 0 \\                                                                                                                                                                       \end{array}                                                                                                                                                                     \right)                                                                                                                                          \Big)
\}.$$
Denote by
$$
D=\{\left(
\begin{array}{cc}
1 & \mu \ln a\\
0 & 1 \\
\end{array}
\right),
\left(
\begin{array}{cc}
a & \frac{\mu}{2}(a-a^{-1})-\mu a\ln a+sa^{-1} \\
0 & a^{-1} \\
\end{array}
\right)|a>0, s\in\mathbb{R}
\}.
$$
Then $D\subseteq G\ltimes_\Ad G$ is the closed connected Lie subgroup and $T_{(e_G, e_G)}D=Gr(B)$. Moreover,
\emptycomment{
\begin{eqnarray*}
D\cdot_\Ad G=\{
\left(
\begin{array}{cc}
p & q+\frac{a\mu}{p} \\
0 & p^{-1} \\
\end{array}
\right),
\left(
\begin{array}{cc}
e^a & \frac{\mu}{2}(e^{a}-e^{-a})-\mu ae^{a}+se^{-a} \\
0 & e^{-a} \\
\end{array}
\right)
\}.
\end{eqnarray*}
For any
$\Big(
\left(
  \begin{array}{cc}
    x & y \\
    0 & \frac{1}{x} \\
  \end{array}
\right),
\left(
  \begin{array}{cc}
    z & w \\
    0 & \frac{1}{z} \\
  \end{array}
\right)
\Big)\in G\cdot_{\ltimes} H$, let
\begin{eqnarray*}
\left\{\begin{array}{rcl}
a&=&lnz, \\
s&=&zw-\frac{\mu}{2}(z^2-1)+\mu z^{2}lnz,\\
p&=&x, \\
q&=&y-\frac{\mu lnz}{x},
\end{array}\right.
\end{eqnarray*}
we have that
\begin{eqnarray*}
&&\Big(\left(
\begin{array}{cc}
p & q+\frac{a\mu}{p} \\
0 & p^{-1} \\
\end{array}
\right),
\left(
\begin{array}{cc}
e^a & \frac{\mu}{2}(e^{a}-e^{-a})-\mu ae^{a}+se^{-a} \\
0 & e^{-a} \\
\end{array}
\right)\Big)\\
&=&\Big(
\left(
  \begin{array}{cc}
    x & y \\
    0 & \frac{1}{x} \\
  \end{array}
\right),
\left(
  \begin{array}{cc}
    z & w \\
    0 & \frac{1}{z} \\
  \end{array}
\right)
\Big)\in D\cdot_\Ad G.
\end{eqnarray*}
}
by similar computation in case $1.1$, it follows that $D\cdot_\Ad (G\times e_G)=G\ltimes_\Ad G$. By Theorem \ref{proint2}, the Rota-Baxter operator $B:\g\lon\g$ can be integrated into a Rota-Baxter operator $\huaB: G\lon G$.

In fact the smooth map $\huaB:G\lon G$ defined by
$$
\huaB
\left(
\begin{array}{cc}
a & b \\
 0 & a^{-1} \\
\end{array}
\right)=
\left(
\begin{array}{cc}
1 & \mu \ln a \\
 0 & 1 \\
\end{array}
\right),
$$
is a Rota-Baxter operator on the Lie group $G$
integrating
$B=
\left(
\begin{array}{cc}
0 & 0 \\
\mu & 0 \\
\end{array}
\right).
$

{\bf Case $2.1:$} the Rota-Baxter operator $B:\g\lon\g$ is given by
$$
B(e_1, e_2)=(e_1, e_2)
\left(
  \begin{array}{cc}
    0 & 0 \\
    \mu & -1 \\
  \end{array}
\right), \quad \text{where}\quad \mu\in\mathbb{R}.
$$
The Lie algebra $Gr(B)\subseteq\g\ltimes_\ad\g$ associated to $B$ is
$$
Gr(B)=\text{span}\{\Big(\left(                                                                                                                                   \begin{array}{cc}                                                                                                                                     0 & \mu\\                                                                                                                                     0 & 0\\                                                                                                                                   \end{array}                                                                                                                                 \right), \left(                                                                                                                                            \begin{array}{cc}                                                                                                                                              1 & 0 \\                                                                                                                                              0 & -1 \\                                                                                                                                            \end{array}                                                                                                                                          \right)\Big), \Big(\left(                                                                                                                                                               \begin{array}{cc}                                                                                                                                                                 0 & -1 \\                                                                                                                                                                 0 & 0 \\                                                                                                                                                               \end{array}                                                                                                                                                             \right),\left(                                                                                                                                                                       \begin{array}{cc}                                                                                                                                                                         0 & 1 \\                                                                                                                                                                         0 & 0 \\                                                                                                                                                                       \end{array}                                                                                                                                                                     \right)                                                                                                                                          \Big)
\}.$$
Denote by
$$
D=\{\left(
\begin{array}{cc}
1 & -s+\mu\ln a\\
0 & 1 \\
\end{array}
\right),
\left(
\begin{array}{cc}
a & sa+\frac{\mu}{2}(a-a^{-1})-\mu a\ln a\\
0 & a^{-1} \\
\end{array}
\right)|a>0, s\in\mathbb{R}
\}.
$$
Then $D\subseteq G\ltimes_\Ad G$ is the closed connected Lie subgroup and $T_{(e_G, e_G)}=Gr(B)$. Moreover,
\emptycomment{
\begin{eqnarray*}
D\cdot_\Ad G=\{
\left(
\begin{array}{cc}
p & q+\frac{\mu a-s}{p} \\
0 & p^{-1} \\
\end{array}
\right),
\left(
\begin{array}{cc}
e^a & se^a+\frac{\mu}{2}(e^a-e^{-a})-\mu ae^{a} \\
0 & e^{-a} \\
\end{array}
\right)
\}.
\end{eqnarray*}
For any
$\Big(
\left(
  \begin{array}{cc}
    x & y \\
    0 & \frac{1}{x} \\
  \end{array}
\right),
\left(
  \begin{array}{cc}
    z & w \\
    0 & \frac{1}{z} \\
  \end{array}
\right)
\Big)\in G\cdot_{\ltimes} H$, let
\begin{eqnarray*}
\left\{\begin{array}{rcl}
a&=&lnz, \\
s&=&\mu lnz+\frac{w}{z}+\frac{\mu}{2}(1-\frac{1}{z^2}),\\
p&=&x, \\
q&=&y+\frac{1}{x}(\frac{w}{z}+\frac{\mu}{2}(1-\frac{1}{z^2})),
\end{array}\right.
\end{eqnarray*}
we have that
\begin{eqnarray*}
&&\Big(\left(
\begin{array}{cc}
p & q+\frac{\mu a-s}{p} \\
0 & p^{-1} \\
\end{array}
\right),
\left(
\begin{array}{cc}
e^a & se^a+\frac{\mu}{2}(e^a-e^{-a})-\mu ae^{a} \\
0 & e^{-a} \\
\end{array}
\right)\Big)=\Big(
\left(
  \begin{array}{cc}
    x & y \\
    0 & \frac{1}{x} \\
  \end{array}
\right),
\left(
  \begin{array}{cc}
    z & w \\
    0 & \frac{1}{z} \\
  \end{array}
\right)
\Big)\in D\cdot_\Ad G.
\end{eqnarray*}
}
by similar computation in case $1.1$, it follows that $D\cdot_\Ad (G\times e_G)=G\ltimes_\Ad G$. By Theorem \ref{proint2}, the Rota-Baxter operator $B:\g\lon\g$ can be integrated into a Rota-Baxter operator $\huaB: G\lon G$.

In fact the smooth map $\huaB:G\lon G$ defined by
$$
\huaB
\left(
\begin{array}{cc}
a & b \\
 0 & a^{-1} \\
\end{array}
\right)=
\left(
\begin{array}{cc}
1 & -\frac{b}{a}+\frac{\mu}{2}(1-a^{-2}) \\
 0 & 1 \\
\end{array}
\right),
$$
is a Rota-Baxter operator on the Lie group $G$
integrating
$B=
\left(
\begin{array}{cc}
0 & 0 \\
\mu & -1 \\
\end{array}
\right).
$

{\bf Case $2.2:$} the Rota-Baxter operator $B:\g\lon\g$ is given by
$$
B(e_1, e_2)=(e_1, e_2)
\left(
  \begin{array}{cc}
    -1 & 0 \\
    \mu & -1 \\
  \end{array}
\right), \quad \text{where}\quad  \mu\in\mathbb{R}.
$$
The Lie algebra $Gr(B)\subseteq\g\ltimes_\ad\g$ associated to $B$ is
$$
Gr(B)=\text{span}\{\Big(\left(                                                                                                                                   \begin{array}{cc}                                                                                                                                     -1& \mu \\                                                                                                                                     0 & 1\\                                                                                                                                   \end{array}                                                                                                                                 \right), \left(                                                                                                                                            \begin{array}{cc}                                                                                                                                              1 & 0 \\                                                                                                                                              0 & -1 \\                                                                                                                                            \end{array}                                                                                                                                          \right)\Big), \Big(\left(                                                                                                                                                               \begin{array}{cc}                                                                                                                                                                 0 & -1 \\                                                                                                                                                                 0 & 0 \\                                                                                                                                                               \end{array}                                                                                                                                                             \right),\left(                                                                                                                                                                       \begin{array}{cc}                                                                                                                                                                         0 & 1 \\                                                                                                                                                                         0 & 0 \\                                                                                                                                                                       \end{array}                                                                                                                                                                     \right)                                                                                                                                          \Big)
\}.$$
Denote by
$$
D=\{\left(
\begin{array}{cc}
a & \frac{-\mu}{2}(a-a^{-1})-sa^{-1} \\
0 & a^{-1} \\
\end{array}
\right),
\left(
\begin{array}{cc}
a^{-1} & \frac{\mu}{2}(a-a^{-1})-\mu a\ln a+sa^{-1} \\
0 & a \\
\end{array}
\right)|a>0, s\in\mathbb{R}
\}.
$$
Then $D\subseteq G\ltimes_\Ad G$ is the closed connected Lie subgroup and $T_{(e_G, e_G)}D=Gr(B)$. Moreover,
\emptycomment{
\begin{eqnarray*}
D\cdot_\Ad G=\{
\left(
\begin{array}{cc}
ap & aq+\frac{1}{p}(-\frac{\mu}{2}(a-a^{-1})-sa^{-1}) \\
0 & a^{-1}p^{-1} \\
\end{array}
\right),
\left(
\begin{array}{cc}
a^{-1} & \frac{\mu}{2}(a-a^{-1})-\mu alna+sa^{-1} \\
0 & a \\
\end{array}
\right)
\}.
\end{eqnarray*}
For any
$\Big(
\left(
  \begin{array}{cc}
    x & y \\
    0 & \frac{1}{x} \\
  \end{array}
\right),
\left(
  \begin{array}{cc}
    z & w \\
    0 & \frac{1}{z} \\
  \end{array}
\right)
\Big)\in G\cdot_{\ltimes} H$, let
\begin{eqnarray*}
\left\{\begin{array}{rcl}
p&=&xz, \\
s&=&\frac{1}{z}\Big(w-\frac{\mu}{2}(\frac{1}{z}-z)-\mu \frac{lnz}{z}\Big),\\
a&=&\frac{1}{z}, \\
q&=&zy-\frac{1}{x}\Big(\mu \frac{lnz}{z}-w\Big),
\end{array}\right.
\end{eqnarray*}
we have that
\begin{eqnarray*}
&&\Big(\left(
\begin{array}{cc}
ap & aq+\frac{1}{p}(-\frac{\mu}{2}(a-a^{-1})-sa^{-1}) \\
0 & a^{-1}p^{-1} \\
\end{array}
\right),
\left(
\begin{array}{cc}
a^{-1} & \frac{\mu}{2}(a-a^{-1})-\mu alna+sa^{-1} \\
0 & a \\
\end{array}
\right)\Big)\\
&=&\Big(
\left(
  \begin{array}{cc}
    x & y \\
    0 & \frac{1}{x} \\
  \end{array}
\right),
\left(
  \begin{array}{cc}
    z & w \\
    0 & \frac{1}{z} \\
  \end{array}
\right)
\Big)\in D\cdot_\Ad G.
\end{eqnarray*}
}
by similar computation in case $1.1$, it follows that $D\cdot_\Ad (G\times e_G)=G\ltimes_\Ad G$. By Theorem \ref{proint2}, the Rota-Baxter operator $B:\g\lon\g$ can be integrated into a Rota-Baxter operator $\huaB: G\lon G$.

In fact the smooth map $\huaB:G\lon G$ defined by
$$
\huaB
\left(
\begin{array}{cc}
a & b \\
 0 & a^{-1} \\
\end{array}
\right)=
\left(
\begin{array}{cc}
a^{-1} & -b+\frac{\mu \ln a}{a} \\
 0 & a \\
\end{array}
\right),
$$
is a Rota-Baxter operator on the Lie group $G$
integrating
$B=
\left(
\begin{array}{cc}
-1 & 0 \\
\mu & -1 \\
\end{array}
\right).
$

{\bf Case $2.3:$} the Rota-Baxter operator $B:\g\lon\g$ is given by
$$
B(e_1, e_2)=(e_1, e_2)
\left(
  \begin{array}{cc}
    \lambda & 0 \\
    \mu & -1 \\
  \end{array}
\right),
\quad \text{where}\quad  \lambda\neq0, -1, \mu\in\mathbb{R}.
$$
The Lie algebra $Gr(B)\subseteq\g\ltimes_\ad\g$ associated to $B$ is
$$
Gr(B)=\text{span}\{\Big(\left(                                                                                                                                   \begin{array}{cc}                                                                                                                                     \lambda& \mu \\                                                                                                                                     0 & -\lambda\\                                                                                                                                   \end{array}                                                                                                                                 \right), \left(                                                                                                                                            \begin{array}{cc}                                                                                                                                              1 & 0 \\                                                                                                                                              0 & -1 \\                                                                                                                                            \end{array}                                                                                                                                          \right)\Big), \Big(\left(                                                                                                                                                               \begin{array}{cc}                                                                                                                                                                 0 & -1 \\                                                                                                                                                                 0 & 0 \\                                                                                                                                                               \end{array}                                                                                                                                                             \right),\left(                                                                                                                                                                       \begin{array}{cc}                                                                                                                                                                         0 & 1 \\                                                                                                                                                                         0 & 0 \\                                                                                                                                                                       \end{array}                                                                                                                                                                     \right)                                                                                                                                          \Big)
\}.$$
Denote by
\begin{eqnarray*}
D&=&\{\left(
\begin{array}{cc}
a^{\lambda} & \frac{\mu}{2\lambda}(a^{\lambda}-a^{-\lambda})-sa^{-\lambda} \\
0 & a^{-\lambda} \\
\end{array}
\right),\\
&&
\left(
\begin{array}{cc}
a & -\frac{\mu}{2\lambda}a^{(\lambda+1)}(a^{\lambda}-a^{-\lambda})+\frac{\mu}{2(\lambda+1)}(a^{(\lambda+1)}-a^{-(\lambda+1)}) a^{\lambda}+sa \\
0 & a^{-1} \\
\end{array}
\right)|a>0, s\in\mathbb{R}
\}.
\end{eqnarray*}
Then $D\subseteq G\ltimes_\Ad G$ is the closed connected Lie subgroup and $T_{(e_G, e_G)}D=Gr(B)$. Moreover,
\emptycomment{
\begin{eqnarray*}
&&D\cdot_\Ad G\\&=&\{
\left(
\begin{array}{cc}
pa^{\lambda} & qa^{\lambda}+\frac{1}{p}(\frac{\mu}{2\lambda}(a^{\lambda}-a^{-\lambda})-sa^{-\lambda}) \\
0 & p^{-1}a^{-\lambda} \\
\end{array}
\right),\\
&&
\left(
\begin{array}{cc}
a & -\frac{\mu}{2\lambda}a^{(\lambda+1)}(a^{\lambda}-a^{-\lambda})+\frac{\mu}{2(\lambda+1)}(a^{(\lambda+1)}-a^{-(\lambda+1)}) a^{\lambda}+sa \\
0 & a^{-1} \\
\end{array}
\right)
\}.
\end{eqnarray*}
For any
$\Big(
\left(
  \begin{array}{cc}
    x & y \\
    0 & \frac{1}{x} \\
  \end{array}
\right),
\left(
  \begin{array}{cc}
    z & w \\
    0 & \frac{1}{z} \\
  \end{array}
\right)
\Big)\in G\cdot_{\ltimes} H$, let
\begin{eqnarray*}
\left\{\begin{array}{rcl}
~~p&=&\frac{x}{z^{\lambda}}, \\ ~~q&=&\frac{y}{z^{\lambda}}+\frac{z^{\lambda}}{x}\Big(\frac{w}{z^{\lambda+1}}-\frac{\mu}{2(\lambda+1)}(z^{(\lambda+1)}-z^{-(\lambda+1)}) z^{-1}\Big),\\
~~a&=&z, \\
~~s&=&\frac{w}{z}+\frac{\mu}{2\lambda}z^{\lambda}(z^{\lambda}-z^{-\lambda})-\frac{\mu}{2(\lambda+1)}(z^{(\lambda+1)}-z^{-(\lambda+1)}) z^{\lambda-1},
\end{array}\right.
\end{eqnarray*}
we have that
\begin{eqnarray*}
&&\Big(
\left(
\begin{array}{cc}
pa^{\lambda} & qa^{\lambda}+\frac{1}{p}(\frac{\mu}{2\lambda}(a^{\lambda}-a^{-\lambda})-sa^{-\lambda}) \\
0 & p^{-1}a^{-\lambda} \\
\end{array}
\right),\\
&&
\left(
\begin{array}{cc}
a & -\frac{\mu}{2\lambda}a^{(\lambda+1)}(a^{\lambda}-a^{-\lambda})+\frac{\mu}{2(\lambda+1)}(a^{(\lambda+1)}-a^{-(\lambda+1)}) a^{\lambda}+sa \\
0 & a^{-1} \\
\end{array}
\right)\Big)\\
&=&\Big(
\left(
  \begin{array}{cc}
    x & y \\
    0 & \frac{1}{x} \\
  \end{array}
\right),
\left(
  \begin{array}{cc}
    z & w \\
    0 & \frac{1}{z} \\
  \end{array}
\right)
\Big)\in D\cdot_\Ad G.
\end{eqnarray*}
}
by similar computation in case $1.1$, it follows that $D\cdot_\Ad (G\times e_G)=G\ltimes_\Ad G$. By Theorem \ref{proint2},  the Rota-Baxter operator $B:\g\lon\g$ can be integrated into a Rota-Baxter operator $\huaB: G\lon G$.

In fact the smooth map $\huaB: G\lon G$ defined by
$$
\huaB
\left(
\begin{array}{cc}
a & b \\
 0 & a^{-1} \\
\end{array}
\right)=
\left(
\begin{array}{cc}
a^{\lambda} & \frac{\mu a^{\lambda}}{2(\lambda+1)}-\frac{\mu a^{-\lambda-2}}{2(\lambda+1)}-\frac{b}{a^{\lambda+1}} \\
 0 & a^{-\lambda} \\
\end{array}
\right),
$$
is a Rota-Baxter operator on the Lie group $G$
integrating
$B=
\left(
\begin{array}{cc}
\lambda & 0 \\
\mu & -1 \\
\end{array}
\right).
$

{\bf Case $3:$} the Rota-Baxter operator $B:\g\lon\g$ is given by
\begin{eqnarray*}
B(e_1, e_2)=(e_1, e_2)
\left(
  \begin{array}{cc}
    -(m+1) & k \\
    -\frac{m+m^2}{k} & m \\
  \end{array}
\right),
 \quad \text{where}\quad k\neq 0.
\end{eqnarray*}
The Lie algebra $Gr(B)\subseteq\g\ltimes_\ad\g$ associated to $B$ is
$$
Gr(B)=\text{span}\{\Big(\left(                                                                                                                                   \begin{array}{cc}                                                                                                                                     -m-1& -\frac{m+m^2}{k} \\                                                                                                                                     0 & m+1\\                                                                                                                                   \end{array}                                                                                                                                 \right), \left(                                                                                                                                            \begin{array}{cc}                                                                                                                                              1 & 0 \\                                                                                                                                              0 & -1 \\                                                                                                                                            \end{array}                                                                                                                                          \right)\Big), \Big(\left(                                                                                                                                                               \begin{array}{cc}                                                                                                                                                                 k & m \\                                                                                                                                                                 0 & -k \\                                                                                                                                                               \end{array}                                                                                                                                                             \right),\left(                                                                                                                                                                       \begin{array}{cc}                                                                                                                                                                         0 & 1 \\                                                                                                                                                                         0 & 0 \\                                                                                                                                                                       \end{array}                                                                                                                                                                     \right)                                                                                                                                          \Big)
\}.$$
Denote by
\begin{eqnarray*}
D&=&\{\left(
\begin{array}{cc}
a & \frac{m}{2k}(a-a^{-1})\\
0 & a^{-1} \\
\end{array}
\right),\\
&&
\left(
\begin{array}{cc}
b & \frac{ab^{-m}}{2k}(ab^{m+1}-a^{-1}b^{-m-1})+\frac{m}{2k}(b-b^{-1})+\frac{1}{2k}(b^{(-2m-1)}-b^{-1})\\
0 & b^{-1} \\
\end{array}
\right)|a, b>0\}.
\end{eqnarray*}
Then $D\subseteq G\ltimes_\Ad G$ is the closed connected Lie subgroup and $T_{(e_G, e_G)}D=Gr(B)$. \emptycomment{Moreover,
\begin{eqnarray*}
D\cdot_\Ad (G\times e_G)&\subseteq&\emptycomment{\{
\left(
\begin{array}{cc}
ap & aq+\frac{m}{2kp}(a-a^{-1}) \\
0 & a^{-1}p^{-1} \\
\end{array}
\right),
\left(
\begin{array}{cc}
b & \frac{ab^{-m}}{2k}(ab^{m+1}-a^{-1}b^{-m-1})+\frac{m}{2k}(b-b^{-1})+\frac{1}{2k}(b^{(-2m-1)}-b^{-1})\\
0 & b^{-1} \\
\end{array}
\right)
\}
}
 G\cdot_\Ad G.
\end{eqnarray*}
}
If $D\cdot_\Ad (G\times e_G)=G\ltimes_{\Ad}G$, then for the $\Big(
\left(
  \begin{array}{cc}
    x & y \\
    0 & \frac{1}{x} \\
  \end{array}
\right),
\left(
  \begin{array}{cc}
    z & \frac{(m-1)z}{2k}-\frac{(m+1)}{2kz} \\
    0 & \frac{1}{z} \\
  \end{array}
\right)
\Big)\in G\ltimes_{\Ad} G$, there exists $a>0, p>0, b>0, q\in\mathbb{R}$, such that
\begin{eqnarray*}
&&\Big(
\left(
  \begin{array}{cc}
    x & y \\
    0 & \frac{1}{x} \\
  \end{array}
\right),
\left(
  \begin{array}{cc}
    z & \frac{(m-1)z}{2k}-\frac{(m+1)}{2kz} \\
    0 & \frac{1}{z} \\
  \end{array}
\right)
\Big)\\
&=&(\left(
\begin{array}{cc}
a & \frac{m}{2k}(a-a^{-1})\\
0 & a^{-1} \\
\end{array}
\right),
\left(
\begin{array}{cc}
b & \frac{ab^{-m}}{2k}(ab^{m+1}-a^{-1}b^{-m-1})+\frac{m}{2k}(b-b^{-1})+\frac{1}{2k}(b^{(-2m-1)}-b^{-1})\\
0 & b^{-1} \\
\end{array}
\right))\\
&&\cdot_\Ad \Big(
\left(
\begin{array}{cc}p & q \\
0 & p^{-1} \\
\end{array}
\right),
\left(
\begin{array}{cc}
 1& 0 \\
0 & 1 \\
\end{array}
\right)
\Big)
\end{eqnarray*}
which means
\begin{eqnarray*}
\left\{\begin{array}{rcl}
~~ap&=&x, \\
~~y&=&aq+\frac{m}{2kp}(a-a^{-1}),\\
~~b&=&z, \\ ~~\frac{(m-1)z}{2k}-\frac{(m+1)}{2kz}&=&\frac{ab^{-m}}{2k}(ab^{m+1}-a^{-1}b^{-m-1})+\frac{m}{2k}(b-b^{-1})+\frac{1}{2k}(b^{(-2m-1)}-b^{-1}),
\end{array}\right.
\end{eqnarray*}
we have that $$b=z, \quad a^2=-1,$$ which implies that the above equation does not have a solution. Thus $D\cdot_\Ad (G\times e_G)\neq G\ltimes_\Ad G$. By Theorem \ref{proint2}, it follows that  the Rota-Baxter operator $B:\g\lon\g$ is not integrable.

{\bf Conclusion:}
\[
\begin{array}{|c|c|c|c|}
\hline
\rm{Rota\mbox{-}Baxter~~~~operators}&B=\left(
\begin{array}{cc}
\lambda & 0 \\
\mu & 0 \\
\end{array}
\right)&B=\left(
\begin{array}{cc}
\lambda & 0 \\
\mu & -1 \\
\end{array}
\right)&B=\left(
\begin{array}{cc}
-m-1 & k \\
\frac{-m^2-m}{k} & m \\
\end{array}
\right) , ~~~~k\neq 0\\
\hline
\rm{integrabel}& \rm{Yes} & \rm{Yes} & \rm{No} \\
\hline
\end{array}~~.
\]
}
\end{ex}

A large class of Rota-Baxter operators are given by projections. More precisely, let $\h$ be a Lie algebra and $\h_1, \h_2$ be Lie subalgebras of $\h$ such that $\h=\h_1\oplus\h_2$ as vector spaces. Denote by $P_1, P_2$ by projections from $\h$ to $\h_1$ and $\h_2$ respectively. Then $-P_1, -P_2$ are Rota-Baxter operators. These Rota-Baxter operators are not always integrable as the following example shows.

\begin{ex}{\rm
Consider the Lie algebra $(\h, [\cdot,\cdot]_\h)$ with a basis $\{u_1, u_2\}$, where $[u_1, u_2]_\h=2(u_2-u_1)$. Then the linear map
$$
B:\h\lon\h, \quad B(k_1u_1+k_2u_2)=-k_2u_2, \quad k_1, k_2\in\mathbb{R}, ~u_1, u_2\in\h,
$$
is a Rota-Baxter operator on $(\h, [\cdot,\cdot]_\h)$. Define a Lie algebra isomorphism $\varphi: \g\lon\h$ by
$$
\varphi(e_1, e_2)=(u_1, u_2)\left(
                              \begin{array}{cc}
                                1 & -1 \\
                                0 & 1 \\
                              \end{array}
                            \right),
$$
where $(\g=\text{span}\{
e_1, e_2\}, [\cdot, \cdot])$ is the Lie algebra given in Example \ref{exint}. We obtain that
$$
B':\g\lon\g, \quad B'(e_1, e_2)=\varphi^{-1}B\varphi(e_1, e_2)=(e_1, e_2)\left(
                                                                           \begin{array}{cc}
                                                                             0 & -1 \\
                                                                             0 & -1 \\
                                                                           \end{array}
                                                                         \right),
$$
is a Rota-Baxter operator on $(\g, [\cdot,\cdot])$. By Example \ref{exint}, we know that the Rota-Baxter operator $B'$ is not integrable, thus $B$ is not integrable.
}
\end{ex}

As a byproduct, we give a matched pair of Lie algebras that can not be integrated into a matched pair of Lie groups at the end of this section.

Consider the Lie algebra $$\Big(\g=\text{span}\{
e_1=\left(
      \begin{array}{cc}
        1 & 0 \\
        0 & -1 \\
      \end{array}
    \right),
e_2=\left(
      \begin{array}{cc}
        0 & 1 \\
        0 & 0 \\
      \end{array}
    \right)
\}, [\cdot, \cdot]\Big),$$
given in Example \ref{exint}. The connected and simply connected Lie group integrating $\g$ is
$$G=\{
\left(
\begin{array}{cc}
a & b \\
0 & \frac{1}{a} \\
\end{array}
\right)|a>0, b\in\mathbb{R}\}.
$$
It shows that the map
$$
B:\g\lon\g, \quad B(e_1, e_2)=(e_1, e_2)
\left(
\begin{array}{cc}
-m-1 & k \\
\frac{-m^2-m}{k} & m \\
\end{array}
\right),
$$
is a Rota-Baxter operator in the Example \ref{exint}. By Proposition \ref{matched}, we have that $(\g, Gr(B); \bar{\phi}, \bar{\theta})$ is a matched pair of Lie algebras, where
\begin{eqnarray*}
\bar{\phi}(x)(B(y), y)&=&(B([x, y]), [x, y]), \\
\bar{\theta}(B(y), y)x&=&B([x, y])+[B(y), x], \quad \forall x\in\g, (B(y), y)\in Gr(B).
\end{eqnarray*}
Denote by $E$ the connected and simply connected Lie group integrating $Gr(B)$. By Example \ref{exint}, it shows that $(\g, B)$ can not be integrated into a Rota-Baxter Lie group. Thus by Theorem \ref{intthm}, it follows that the matched pair of Lie algebras $(\g, Gr(B); \bar{\phi}, \bar{\theta})$ can not be integrated into a matched pair of $G$ and $E$.

\vspace{2mm}
\noindent
{\bf Declaration of interests. } The authors have no conflict of interest to declare that are relevant to this article.

\noindent
{\bf Data availability. } Data sharing is not applicable to this article as no new data were created or analyzed in this study.


\begin{thebibliography}{a}
\bibitem{Bai}
C. Bai, A unified algebraic approach to the classical Yang-Baxter equation. \emph{J. Phys. A: Math. Theor.} {\bf 40} (2007), 11073-11082.

\bibitem{BGN} C. Bai, L. Guo and X. Ni, Nonabelian generalized Lax pairs, the classical Yang-Baxter equation and PostLie algebras. {\em Comm. Math. Phys.} {\bf 297} (2010), 553-596.

\bibitem{BaG} V.~G. Bardakov and  V.~Gubarev, Rota-Baxter operators on groups. \emph{Proc. Indian Acad. Sci. Math. Sci.} {\bf 133} (2023), no. 1, Paper No. 4, 29pp.

\bibitem{BaG2} V.~G. Bardakov and  V.~Gubarev,
Rota-Baxter groups, skew left braces, and the Yang-Baxter equation.  {\em J. Algebra} {\bf596} (2022), 328-351.

\bibitem{Bax}
G. Baxter, An analytic problem whose solution follows from a simple algebraic identity.
\emph{Pacific J. Math.} {\bf 10} (1960), 731-742.

\bibitem{BD}
A. A. Belavin, V.G. Drinfeld, Solutions of the classical Yang-Baxter equation for simple Lie algebras. \emph{Funct. Anal. Appl.} {\bf 16} (1982), 159-180.

\bibitem{CZhu}
M. Cueca and C. Zhu, Shifted symplectic higher Lie groupoids and classifying spaces. \emph{Adv. Math.} {\bf 413} (2023), Paper No. 108829, 64 pp.

\bibitem{cf}
M. Crainic and R. L. Fernandes, Integrability of Lie brackets. \emph{Ann. of Math. (2)} {\bf 157} (2003), 575-620.

\bibitem{CK}
A. Connes and D. Kreimer, Renormalization in quantum field theory and the Riemann-Hilbert problem. I. The Hopf algebra structure of graphs and the main theorem. {\em  Comm. Math. Phys.} {\bf 210} (2000), 249-273.

\bibitem{Dri}
V. G. Drinfel'd, Quantum groups. \emph{Proc. ICM}, Berkeley {\bf 1} (1986), 789-820.


\bibitem{GKST}
M. Goncharov, P. Kolesnikov, Y. Sheng and R. Tang, Formal integration of complete Rota-Baxter Lie algebras. arXiv:2404. 09261.

\bibitem{Gub} L. Guo,  An introduction to Rota-Baxter algebra. Surveys of Modern Mathematics, 4. International Press, Somerville, MA; Higher Education Press, Beijing, (2012). xii+226 pp.

\bibitem{GLS} L. Guo, H. Lang and Y. Sheng, Integration and geometrization of Rota-Baxter Lie algebras. \emph{Adv. Math.} {\bf 387} (2021), 107834.

\bibitem{HS}
S. Helgason, Differential Geometry, Lie Groups and Symmetric Spaces. Pure Appl. Math., 80, Academic Press, New York, NY, (1978).

\bibitem{Neeb}
J. Hilgert and K.H. Neeb, Structure and Geometry of Lie Groups. Springer, Berlin (2012).

\bibitem{JS}
J. Jiang and Y. Sheng, Deformations, cohomologies and integrations of relative difference Lie algebras. \emph{J. Algebra} {\bf 614} (2023), 535-563.

\bibitem{JLS}
J. Jiang, Y. Li and Y. Sheng, Cohomologies of difference Lie groups and the van Est theorem. \emph{J. Algebra} {\bf 645} (2024), 116-142.

\bibitem{JSZ}
J. Jiang, Y. Sheng and C. Zhu, Lie theory and cohomology of relative Rota-Baxter operators. \emph{J. Lond. Math. Soc. (2)} {\bf 109} (2024), no. 2, Paper No. e12863, 34pp.

\bibitem{KOS}
Y. Kosmann-Schwarzbach, Lie bialgebras, Poisson Lie groups and dressing transformation, in: Integrability of Nonlinear Systems, Pondicherry, 1996, in: Lecture Notes in Phys., vol. 495, Springer, Berlin, 1997, pp. 104-170.

\bibitem{Kos}
Y. Kosmann-Schwarzbach and F. Magri, Poisson-Lie groups and complete integrability, I: drinfeld bialgebras, dual extensions and their canonical representations. \emph{Ann. Inst. H. Poincar\'e Phys. Theo.} {\bf 49} (1988), 433-460.

\bibitem{Ku}
B. A. Kupershmidt, What a classical $r$-matrix really is. \emph{J. Nonlinear Math. Phys.} {\bf 6} (1999), 448-488.

\bibitem{LS}
H. Lang and Y. Sheng, Factorizable Lie bialgebras, quadratic Rota-Baxter Lie algebras and Rota-Baxter Lie bialgebras. \emph{Comm. Math. Phys.} {\bf 397} (2023), 763-791.

\bibitem{Lee}
J. M. Lee, Introduction to Smooth Manifolds, GTM, vol. 218. Springer, (2013).

\bibitem{LuW}
J. H. Lu and A. Weinstein, Poisson Lie groups, dressing transformations, and Bruhat decompositions. \emph{J. Diff. Geom.} {\bf 31} (1990), no. 2, 501-526.

\bibitem{Maj}
S. H. Majid, Matched pairs of Lie groups associated to solutions of the Yang Baxter equations. \emph{Pacific J. Math.} {\bf 141} (1990), no. 2, 311-332.

\bibitem{TangM}
R. A. Mehta and X. Tang. From double Lie groupoids to local Lie 2-groupoids. \emph{Bull. Braz. Math. Soc.
(N.S.)}, {\bf 42} (2011), no. 4, 651-681.

\bibitem{neeb:cent-ext-gp} K. H. Neeb, Central extensions of infinite-dimensional Lie groups. \emph{Ann. Inst. Fourier (Grenoble)} {\bf 52} (2002), no. 5, 1365-1442.

\bibitem{Rat}
N. Rathee, Rota-Baxter operators and skew left brace structures over Heisenberg group. arXiv:2402.15428v1.

\bibitem{RS}
N. Reshetikhin and M. Semenov-Tian-Shansky, Quamtum $R$-matrices and factorization problems. \emph{J. Geom. Phys.} {\bf 5}, 533-550.

\bibitem{STS}
M. Semonov-Tian-Shansky, What is a classical $r$-matrix? \emph{Funct. Anal. Appl.} {\bf 17} (1983), 259-272.

\bibitem{Tak}
M. Takeuchi, Matched pairs of groups and bismash products of Hopf algebras. \emph{Comm. Algebra} {\bf 9} (1981), no. 8, 841-882.



\bibitem{wz:int} C. Wockel and C. Zhu, Integrating central extensions of Lie algebras via Lie 2-groups. \emph{J. Eur. Math. Soc.} \text{\bf 18} (2016), no. 6, 1273-1320.

\bibitem{Zhu11}
C. Zhu, n-groupoids and stacky groupoids. \emph{Int. Math. Res. Not. IMRN}, \text{\bf 21} (2009), 4087-4141.


\end{thebibliography}
 \end{document}